\newtheorem{thm}{Theorem}[section]
\newtheorem{Con}[thm]{Conjecture}
\newtheorem{cor}[thm]{Corollary}
\newtheorem{lem}[thm]{Lemma}
\newtheorem{pro}[thm]{Proposition}
\theoremstyle{definition}
\newtheorem{rem}[thm]{Remark}
\numberwithin{equation}{section}
\newcommand{\X}{\mathbb{X}}
\newcommand{\Y}{\mathbb{Y}}
\newcommand{\ex}{\mathbb{E}}
\newcommand{\re}{\textup{Re}}
\newcommand{\im}{\textup{Im}}
\newcommand{\pr}{\mathbb{P}}
\newcommand{\B}{\textup{Bi}_p}
\newcommand{\G}{\mathcal{G}}
\newcommand{\F}{\mathbb{F}_p}
\newcommand{\Fs}{\mathbb{F}_p^{\times}}
\newcommand{\M}{\mathcal{M}_p}
\newcommand{\newabstract}[1]{%
  \par\bigskip
  \csname otherlanguage*\endcsname{#1}%
  \csname captions#1\endcsname
  \item[\hskip\labelsep\scshape\abstractname.]
}
\begin{document}

\baselineskip=17pt

\title{On the distribution of the maximum of cubic exponential sums}

\author{Youness Lamzouri}

\address{Department of Mathematics and Statistics,
York University,
4700 Keele Street,
Toronto, ON,
M3J1P3
Canada}

\email{lamzouri@mathstat.yorku.ca}

\date{}

\begin{abstract}
In this paper, we investigate the distribution of the maximum of partial sums of certain cubic exponential sums, commonly known as ``Birch sums''. Our main theorem gives upper and lower bounds (of nearly the same order of magnitude) for the distribution of large values of this maximum, that hold in a wide uniform range. This improves a recent result of Kowalski and Sawin.  The proofs use a blend of probabilistic methods, harmonic analysis techniques, and deep tools from algebraic geometry. The results can also be generalized to other types of $\ell$-adic trace functions. In particular, the lower bound of our result also holds for partial sums of Kloosterman sums. As an application, we show that there exist $x\in [1, p]$ and $a\in \Fs$ such that $|\sum_{n\leq x} \exp(2\pi i (n^3+an)/p)|\ge (2/\pi+o(1)) \sqrt{p}\log\log p$. 
The uniformity of our results suggests that this bound is optimal, up to the value of the constant.


\end{abstract}

\subjclass[2010]{Primary 11L03, 11T23; Secondary 14F20, 60F10}

\thanks{The author is partially supported by a Discovery Grant from the Natural Sciences and Engineering Research Council of Canada.}

\maketitle

\section{Introduction}
Let $p\geq 3$ be a prime number, $p\nmid a$ be an integer, and $E$ be the elliptic curve over $\mathbb{Q}$ given by the Weierstrass equation $y^2=x^3+ax$. If we put $a_p=p+1-|E(\F)|$, then we have
\begin{equation}\label{Char}
a_p=\sum_{n\leq p} \chi_p(n^3+an),
\end{equation}
where $\chi_p$ is the Legendre symbol modulo $p$. Furthermore, we have the   Hasse bound $|a_p|\leq 2\sqrt{p}$. In \cite{Bi}, Birch proved the ``vertical'' Sato-Tate law for the $a_p$'s, which states that as $a$ varies in $\Fs$, the quantity $a_p/\sqrt{p}$ becomes equidistributed in $[-2, 2]$ with respect to the Sato-Tate measure 
$$ \mu_{ST}=\frac{1}{\pi} \sqrt{1-\frac{x^2}{4}}dx, $$
as $p\to \infty$. In analogy with the multiplicative character sum \eqref{Char}, Birch \cite{Bi} also conjectured that a similar result should hold for the normalized cubic exponential sum
\begin{equation}\label{BirchSum}
\B(a):=\frac{1}{\sqrt{p}}\sum_{n\in \F} e_p\left(n^3+an\right),
\end{equation}
where here and throughout we let $e(z):=\exp(2\pi i z)$, and $e_p(z):=e(z/p)$ is the standard additive character modulo $p$. The sums $\B(a)$ are commonly known as \emph{Birch sums}.  Birch's conjecture was subsequently proved by Livn\'e in \cite{Li}. 

Recently, Kowalski and Sawin \cite{KoSa} investigated the distribution of the polygonal paths formed by linearly interpolating the partial sums 
\begin{equation}\label{PS}
\frac{1}{\sqrt{p}}\sum_{0\leq n\leq x} e_p\left(n^3+an\right),
\end{equation}
for $ 0\leq x\leq p-1$. Let $$\M(a)=\mathcal{M}_{\B}(a):=\max_{x<p} \frac{1}{\sqrt{p}} \left|\sum_{0\leq n\leq x} e_p(n^3+an)\right|.$$
Among their results, Kowalski and Sawin proved that as $a$ varies in $\Fs$ and $p\to \infty$, the quantity $\M(a)$ converges in law to the random variable 
\begin{equation}\label{ProbModel}
 \mathbb{M}=\max_{\alpha\in [0, 1)} \left| \alpha \X(0)+ \sum_{h\neq 0} \frac{e(\alpha h)-1}{2\pi i h} \X(h)\right|,
 \end{equation}
where $\{\X(h)\}_{h\in \mathbb{Z}}$ is a sequence of independent random variables with Sato-Tate distributions on $[-2, 2]$.  The proof uses deep results of Deligne, Katz, Laumon and others concerning the ramification and monodromy groups of certain sheaves associated to Birch sums. The origin of the probabilistic model \eqref{ProbModel} comes from the following identity, which is an immediate consequence of the discrete Plancherel formula 
\begin{equation}\label{Planche}
\frac{1}{\sqrt{p}} \sum_{0\leq n\leq x} e_p(n^3+an)=\frac{1}{\sqrt{p}}\sum_{|h|<p/2}\gamma_p(h;x) \B(a-h),
\end{equation}
where $$\gamma_p(h;x):= \frac{1}{\sqrt{p}} \sum_{0\leq m\leq x}e_p\left(mh\right) $$
are the Fourier coefficients modulo $p$ of the characteristic function of the interval $[0, x]$. Furthermore, one has the elementary estimate (see page 13 of \cite{KoSa})
\begin{equation}\label{Planche2}
\frac{1}{\sqrt{p}}\gamma_p(h;x)= \frac{e_p\left(xh\right)- 1}{2\pi i h} +O\left(\frac{1}{p}\right),
\end{equation}
which holds uniformly for $1\leq |h|<p/2$. Therefore, assuming that the Birch sums $\B(a-h)$ behave ``independently'' for different shifts $h$ and are all Sato-Tate distributed on $[-2, 2]$, shows that $\mathbb{M}$ is a good model for $\M(a)$. 

For a positive real number $V$, we define
$$ \Phi_p(V):= \frac{1}{p-1} \left|\{ a\in \Fs: \M(a)>V\}\right|.$$
Kowalski and Sawin \cite{KoSa} proved that the limiting distribution of $\Phi_p(V)$ is double exponentially decreasing. More precisely, they showed that there exists a constant $c>0$ such that  
\begin{equation}\label{LargeDev}
c^{-1}\exp\left(-\exp\left(cV)\right)\right)\leq \lim_{p\to \infty} \Phi_p(V) \leq c \exp\left(-\exp\left(c^{-1}V)\right)\right).
\end{equation}
In this paper, we shall study the distribution of large values of $\M(a)$ as $a$ varies in $\Fs$. Our main result is the following theorem which substantially improves the estimate \eqref{LargeDev}.

\begin{thm}\label{Main}
Let $p$ be a large prime. For all real numbers $1\ll V\leq (2/\pi)\log\log p-2\log\log\log p$ we have 
$$ \exp\left(-A_0 \exp\left(\frac{\pi}{2} V\right) \left(1+O\left(\sqrt{V} e^{-\pi V/4}\right)\right)\right) \leq \Phi_p(V) \leq \exp\left(-C \exp\left(\left(\frac{\pi}{2}-\delta\right)V\right)\right)
$$
for some positive constant $C$, where 
$$ \delta= \frac{4\pi-\pi^2}{2\pi +8}=0.18880..., \text{ and } A_0= \exp\left(-\gamma-1 -\frac{1}{2} \int_0^{\infty} \frac{f(u)}{u^2}du \right),$$
where $\gamma$ is the Euler-Mascheroni constant, and $f:[0, \infty)\to \mathbb{R}$ is defined by
$$f(t):=\begin{cases} \log \ex (e^{t\X}) & \text{ if } 0\leq t <1,\\ \log \ex (e^{t\X})- 2t & \text{ if } t\geq 1.\end{cases}$$
where $\X$ is a random variable with Sato-Tate distribution  $\mu_{ST}$.
\end{thm}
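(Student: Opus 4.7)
The plan is to combine the Plancherel-type identity \eqref{Planche}--\eqref{Planche2} with the deep equidistribution results from algebraic geometry underlying \cite{KoSa}, and then apply the method of moments for the upper bound and an explicit constructive argument for the lower bound. As a preliminary step, using \eqref{Planche} together with \eqref{Planche2} I rewrite
$$\frac{1}{\sqrt{p}}\sum_{0\leq n\leq x} e_p(n^3+an)=\frac{x+1}{p}\B(a)+\sum_{0<|h|<p/2}\frac{e_p(xh)-1}{2\pi i h}\B(a-h)+O\!\left(\frac{1}{\sqrt{p}}\right),$$
and truncate the Fourier series at a cutoff $N$ of order $e^{\pi V/2}$. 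The tail $|h|>N$ is negligible on average over $a$, so the distribution of $\M(a)$ is essentially controlled by the joint distribution of $\{\B(a-h)\}_{|h|\leq N}$ as $a$ varies over $\Fs$. By Katz's monodromy and equidistribution theorems employed in \cite{KoSa}, this joint distribution is close---for $N$ below an explicit power of $p$---to a tuple of independent Sato-Tate random variables $(\X(h))_{|h|\leq N}$.

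For the upper bound, I would apply the moment method. Discretizing the maximum over $x$ and using Chebyshev's inequality,
$$\Phi_p(V)\ll p\cdot V^{-2k}\,\max_{x<p}\,\frac{1}{p-1}\sum_{a\in\Fs}\left|\sum_{0<|h|\leq N}\frac{e_p(xh)-1}{2\pi i h}\B(a-h)\right|^{2k}.$$
Expanding the $2k$-th moment, the main contribution factorizes through the joint equidistribution into a product of Sato-Tate moments $\ex[\X^{2j}]=C_j$ (Catalan numbers), while off-diagonal contributions are negligible by the product monodromy. Choosing $k$ and $N$ as large as the algebraic-geometric input allows and then optimizing, one obtains $\Phi_p(V)\leq\exp(-C e^{(\pi/2-\delta)V})$. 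The specific value $\delta=\pi(4-\pi)/(2(\pi+4))$ arises from the tradeoff between $k$ and $N$ that the equidistribution permits; equivalently, $\pi/2-\delta=\pi^2/(\pi+4)$ is the optimal exponent one can reach when the moment order cannot saturate.

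For the lower bound, I would exploit a favorable choice of $x$: taking $x_\ast$ near $p/2$ makes $(e_p(x_\ast h)-1)/(2\pi i h)\approx -1/(\pi i h)$ for odd $h$ and negligible for even $h$. Hence
$$\M(a)\geq\left|\sum_{0<|h|\leq N,\ h\text{ odd}}\frac{\B(a-h)}{\pi i h}\right|+O(1),$$
and with $N=e^{\pi V/2}$ it suffices to lower-bound the proportion of $a\in\Fs$ for which the right-hand side exceeds $V$. Via the joint equidistribution this reduces to the probability that most of the $\X(h)$ with $|h|\leq N$ are close to $\pm 2$ with signs matching $\mathrm{sgn}(1/h)$. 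Evaluating this product of Sato-Tate tail probabilities by a Laplace/saddle-point analysis on the cumulant generating function $t\mapsto\log\ex(e^{t\X})$, and matching the optimal $t$ against $V$, yields the factor $\exp(-A_0 e^{\pi V/2}(1+o(1)))$. The constant $A_0=\exp(-\gamma-1-\frac{1}{2}\int_0^\infty f(u)/u^2\,du)$ emerges as a regularized integral of the log-MGF; the piecewise definition of $f$, with $-2t$ subtracted for $t\geq 1$, compensates for the leading exponential growth $\log\ex(e^{t\X})\sim 2t$ coming from the compact support of $\mu_{ST}$ at $\pm 2$.

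The main obstacle is establishing the product equidistribution of $(\B(a-h))_{|h|\leq N}$ with $N$ as large as $e^{\pi V/2}$, and controlling the resulting error terms precisely enough to extract the sharp constant $A_0$ in the lower bound. This requires careful analysis of the monodromy groups of tensor-product sheaves associated to many simultaneously shifted Birch sums, generalizing the key input of \cite{KoSa}. The upper bound is further limited by the inherent ceiling of the moment method: one cannot push $k$ beyond roughly $\log p/\log N$ without the equidistribution error swamping the main term, which is precisely what produces the gap $\delta$ between the upper and lower bound exponents.
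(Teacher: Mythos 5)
Your lower-bound plan is essentially the paper's: pick $x=p/2$ so that the kernel $\widetilde\gamma_p(h)$ becomes $\approx i/(\pi h)$ for odd $h$ and negligible for even $h$, transfer the Laplace transform of $\sum_h\widetilde\gamma_p(h)\B(a-h)$ to that of the independent model $\sum_h\widetilde\gamma_p(h)\X(h)$ via the uniform algebraic-geometry moment result, compute the cumulant generating function asymptotically using the regularized $f$, and run a saddle-point argument. Two small calibrations: the real technical constraint is not the size of the truncation $N$ (the paper's Lemma on joint moments works uniformly for all shifts $|h|<p/2$); it is the moment/Laplace order, which must stay below roughly $(\log p)/\log\log p$ to keep the error $2^kk/\sqrt p$ under control, and this is precisely what caps the range $V\le(2/\pi)\log\log p-2\log\log\log p$. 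Also the paper works with $\im\frac1{\sqrt p}\sum_{n\le p/2}e_p(n^3+an)$ as a real-valued quantity rather than its absolute value, which is what makes the one-sided Laplace analysis clean, but this is cosmetic.

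Your upper-bound plan, however, has a genuine gap. The naive moment--Markov bound
$$\Phi_p(V)\ll p\cdot V^{-2k}\max_{x}\frac1{p-1}\sum_a\Bigl|\sum_{0<|h|\le N}c_x(h)\B(a-h)\Bigr|^{2k}$$
cannot reach the exponent $\pi/2-\delta$, for two reasons. First, the $2k$-th moment of $\sum_h c(h)\X(h)$ with $|c(h)|\le 1/(\pi|h|)$ is of size $(c\log k)^{2k}$ for $k$ large, so the optimal Markov bound has rate $\exp(-e^{cV})$ with a constant $c$ far below $\pi/2-\delta$ (one finds $c\approx\pi/(15e)$ from the crude moment bound, versus $\pi/2-\delta\approx1.38$). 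Second, the factor $p$ (or $p^{1/8}$ after discretizing via Weyl) from the union bound over $x$ cannot be absorbed, because the moment order $k$ is capped at $\ll(\log p)/\log\log p$ by the algebraic-geometry error terms, so $e^{-2k}\cdot p^{1/8}$ does not go to zero. The paper circumvents both problems with a head/tail split: the ``head'' $\sum_{|h|\le H}c_x(h)\B(a-h)$ is bounded \emph{deterministically} by
$$\mathcal G(H):=\max_{\alpha}\max_{(y_h)\in[-2,2]^{2H}}\Bigl|\sum_{1\le|h|\le H}\frac{e(\alpha h)-1}{h}y_h\Bigr|\le\Bigl(2+\frac8\pi\Bigr)\log H+O(1),$$
a harmonic-analysis bound proved via the Fourier expansion of an explicit $2\pi$-periodic envelope $g$; this absorbs the entire maximum over $x$. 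Only the ``tail'' $|h|>H$ with $H\asymp k$ is treated by moments, and there the bound is the very strong $e^{-2k}$, not $(\log k)^{2k}$. The constant $\delta$ is then forced: $1/\bigl(\tfrac1\pi+\tfrac4{\pi^2}\bigr)=\pi^2/(\pi+4)=\pi/2-\delta$ is exactly the reciprocal of $(2+8/\pi)/(2\pi)$. Your attribution of $\delta$ to ``the tradeoff between $k$ and $N$ that the equidistribution permits'' is therefore not correct; $\delta$ comes from the numerical coefficient in the worst-case bound for $\mathcal G(H)$, and is unrelated to the quality of the algebraic-geometry input. Without an analogue of this deterministic head bound, the proposal does not produce the stated upper bound.
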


\begin{rem}
The upper bound of Theorem \ref{Main} is valid in the extended range $1\ll V \leq (\log\log p)/(\pi/2-\delta)-2\log\log\log p$. It would be interesting to obtain a more precise estimate for $\Phi_p(V)$. The analogy with character sums (see the discussion on page 4) lead us to believe that the true order of magnitude of $\Phi_p(V)$ is perhaps closer to the lower bound of Theorem \ref{Main}.
It is also curious to note that the constant $\frac{1}{2} \int_0^{\infty} \frac{f(u)}{u^2}du$ appears in an asymptotic estimate of Liu-Royer-Wu \cite{LRW}, for the distribution function of large (or small) values of $L$-functions attached to holomorphic cusp forms at $s=1$.
\end{rem}

Kowalski and Sawin also investigated the polygonal paths formed by the linear interpolation of partial sums of  Kloosterman sums and obtained a similar result to \eqref{LargeDev} in this case. The lower bound of Theorem \ref{Main} holds verbatim for the maximum of partial sums of Kloosterman sums, but the proof of the upper bound fails in this case. Indeed, one of the main ingredients in this proof are strong bounds for short sums of cubic exponential sums, which are not currently known for Kloosterman sums. More precisely, in order to carry out the proof of the upper bound of Theorem \ref{Main} to this setting one needs the following bound
\begin{equation}\label{Kloosterman}
 \sum_{n\in I} e_p (ax+ \overline{x})\ll |I|^{1-\varepsilon},
\end{equation}
 for any interval $I\subset [1, p]$ of length $p^{1/2+\varepsilon/2}$ for some $\varepsilon>0$, where $\overline{x}$ is the multiplicative inverse of $x$ modulo $p$. 

Our result should be compared with the recent work of Bober, Goldmakher, Granville and Koukoulopoulos \cite{BGGK} concerning the distribution of the maximum of character sums. The proof of the upper bound of Theorem \ref{Main} follows the same strategy as \cite{BGGK} but uses several different ingredients, while the proof of the lower bound is completely different. This is mainly due to the lack of multiplicativity in our case which plays a central role for character sums. In particular, the analogue of the lower bound of Theorem \ref{Main} in \cite{BGGK} follows readily by relating character sums to values of Dirichlet $L$-functions and using the work of Granville-Soundararajan \cite{GS} on the distribution of $L(1, \chi)$. Another crucial difference in our case is the fact that the Birch sums $\B(a-h)$ and $\B(a+h)$ behave independently, while this is clearly not the case for the values $\chi(-h)$ and $\chi(h)$ where $\chi$ is a Dirichlet character. This makes the analysis of the exponential sum $\sum_{|h|\leq H} \frac{e(\alpha h)-1}{h} \B(a-h)$ more complicated in our case, which explains why our Theorem \ref{Main} is less precise than Theorem 1.1 of \cite{BGGK}. 
However, our probabilistic model is easier to work with, due to the fact that the $\X(h)$ are independent (in the case of character sums, the $\X(h)$ are multiplicative random variables that are independent for different primes). This is exploited in the proof of the lower bound of Theorem \ref{Main} through relating the Laplace transform of the sum $\im \frac{1}{\sqrt{p}} \sum_{n\leq p/2} e_p(n^3+an)$ to that of its corresponding random model, and using the saddle-point method to obtain precise estimates for the distribution of its large values (see Section 6 below).

The Birch sums \eqref{BirchSum} are examples of $\ell$-adic trace functions over finite fields. These trace functions have been extensively studied in a series of recent works by Fouvry, Kowalski, and Michel \cite{FKM1}, \cite{FKM2}, \cite{FKM3}, Fouvry,  Kowalski, Michel, Raju, Rivat, and Soundararajan \cite{FKMRRS}, Kowalski and Sawin \cite{KoSa}, \cite{KoSa2}, and Perret-Gentil \cite{Pe1}, \cite{Pe2}. Our results can be generalized to other types of trace functions that are attached to certain \emph{coherent} families of $\ell$-adic sheaves (in the sense given by Perret-Gentil \cite{Pe1}), if their short sums satisfy a bound similar to \eqref{Kloosterman}. The precise definition of a coherent family is technical (see \cite{Pe1}), but roughly speaking, these are sheaves over $\F$ for which the ``conductor'' is bounded independently of $p$, the arithmetic and geometric monodromy groups are equal, of fixed type and large, and the sheaves formed by additive shifts are ``independent''. As an example, Theorem \ref{Main} can be generalized for the  partial sums of the exponential sum 
\begin{equation}\label{GBirch}
\frac{1}{\sqrt{p}}\sum_{n\in \F} e_p\left(an+ f(n)\right),
\end{equation}
where $f\in \mathbb{Z}[t]$ is an odd polynomial of degree $n\geq 3$ with $n\neq 7, 9$. In this case the sums \eqref{GBirch} are real valued, and the monodromy group of the associate sheaf is $\textup{Sp}_{n-1}(\mathbb{C})$. 

As a corollary of the lower bound of Theorem \ref{Main} (more precisely of Theorem \ref{LowerB} below), we exhibit large values of partial sums of Birch sums. The same result also holds for partial sums of Kloosterman sums. 
\begin{cor}\label{Main2} Let $p$ be a large prime. There exist at least $p^{1-1/\log\log p}$ points $a\in \Fs$ such that 
$$ \left|\sum_{n\leq p/2} e_p(n^3+an)\right|\ge \left(\frac{2}{\pi}+o(1)\right) \sqrt{p}\log\log p.$$

\end{cor}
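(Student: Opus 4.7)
The plan is to apply the lower bound of Theorem \ref{LowerB} with an essentially optimal choice of the parameter $V$. As the discussion preceding the statement indicates, Theorem \ref{LowerB} provides a lower bound of the form $\exp\bigl(-A_0 e^{\pi V/2}(1+o(1))\bigr)$ for the proportion of $a\in \Fs$ for which $\mathrm{Im}\bigl(p^{-1/2}\sum_{n\leq p/2} e_p(n^3+an)\bigr)>V$, valid in the range $1\ll V\le (2/\pi)\log\log p - 2\log\log\log p$. Since $|z|\ge |\mathrm{Im}\, z|$, any such $a$ automatically satisfies $|\sum_{n\leq p/2} e_p(n^3+an)|\ge \sqrt{p}\,V$, so it suffices to count such $a$.

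First I would take $V$ at the upper edge of the admissible range:
$$ V \;:=\; \frac{2}{\pi}\log\log p - 2\log\log\log p. $$
A direct computation gives $\exp(\pi V/2) = (\log p)/(\log\log p)^{\pi}$. Since $\pi>1$ and $A_0$ is a fixed constant, for $p$ large one has
$$ A_0\exp(\pi V/2)(1+o(1)) \;=\; \frac{A_0\log p}{(\log\log p)^{\pi}}(1+o(1)) \;\le\; \frac{\log p}{\log\log p}. $$
Therefore Theorem \ref{LowerB} yields
$$ \#\!\left\{a\in \Fs:\; \mathrm{Im}\Bigl(p^{-1/2}\sum_{n\leq p/2} e_p(n^3+an)\Bigr)>V\right\} \;\ge\; (p-1)\exp\!\left(-\frac{\log p}{\log\log p}\right) \;\ge\; p^{1-1/\log\log p}. $$

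Each such $a$ then satisfies
$$ \left|\sum_{n\leq p/2} e_p(n^3+an)\right| \;\ge\; \sqrt{p}\,V \;=\; \frac{2}{\pi}\sqrt{p}\log\log p - 2\sqrt{p}\log\log\log p \;=\; \left(\frac{2}{\pi}+o(1)\right)\sqrt{p}\log\log p, $$
which is exactly the required bound. The entire probabilistic and harmonic-analytic content (the Laplace-transform matching with the random model and the saddle-point analysis) is already packaged into Theorem \ref{LowerB}, so no substantive obstacle arises here; the corollary is essentially a bookkeeping step in which $V$ is tuned so that the double-exponential tail $\exp(-A_0 e^{\pi V/2})$ just meets the polynomial threshold $p^{-1/\log\log p}$, and the only delicate point is to remain within the admissible range, which forces the slight loss $-2\log\log\log p$ in the choice of $V$ (this loss is comfortably absorbed into the $o(1)$ of the final bound).
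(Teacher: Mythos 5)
Your approach is exactly the one the paper intends: Corollary~\ref{Main2} is stated as a direct consequence of Theorem~\ref{LowerB}, and choosing $V$ at the top of the admissible range is precisely the right move. There is, however, a small arithmetic slip in the closing chain of inequalities. Once you relax the exponent to $\log p/\log\log p$, the claimed inequality
\[
(p-1)\exp\!\left(-\frac{\log p}{\log\log p}\right)\;\ge\; p^{1-1/\log\log p}
\]
is false, since the left side equals $(p-1)\,p^{-1/\log\log p}$ while the right side equals $p\cdot p^{-1/\log\log p}$. The fix is to \emph{not} discard the power of $\log\log p$: Theorem~\ref{LowerB} actually gives a proportion $\ge \exp\!\bigl(-A_0\log p\,(\log\log p)^{-\pi}(1+o(1))\bigr)$, so the count of admissible $a$ is at least
\[
(p-1)\,p^{-A_0(1+o(1))/(\log\log p)^{\pi}}.
\]
Because $\pi>1$, the factor $p^{-A_0(1+o(1))/(\log\log p)^{\pi}}$ exceeds $p^{-1/\log\log p}$ by a multiplicative amount that tends to infinity (namely $\exp\bigl(\tfrac{\log p}{\log\log p}(1 - A_0(1+o(1))(\log\log p)^{1-\pi})\bigr)$), which easily absorbs the $(p-1)/p$ loss and yields the claimed $p^{1-1/\log\log p}$. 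With this repair the argument is complete and coincides with the paper's.
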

\begin{rem}
Using a completely different method, Bonolis \cite{Bo}  independently proved that $\max_{x<p}\left|\sum_{n\leq x} e_p(n^3+an)\right| \geq c\sqrt{p}\log\log p$ for at least $p^{1-\varepsilon}$ points $a\in \Fs$, though with a much smaller positive constant $c$. He also obtained the same result for partial sums of Kloosterman sums. 
\end{rem}

The classical Weil bound for exponential sums implies that $|\B(a)|\leq 2$. Using this bound in \eqref{Planche} we obtain the analogue of the P\'olya-Vinogradov inequality for $\M(a)$, namely that
$$ \max_{x<p}\left|\sum_{n\leq x} e_p(n^3+an)\right| \ll \sqrt{p} \log p, $$
uniformly for $a\in \Fs$. The double exponential decay of the distribution $\Phi_p(V)$ and the uniformity of Theorem \ref{Main} lead us to formulate the following stronger conjecture, which is optimal up to the value of the constant by Corollary \ref{Main2}.
\begin{Con}\label{Incomplete}
There exists a positive constant $C_0$, such that for all primes $p\geq 3$ and all $1<x\leq p$ we have
$$ \left|\sum_{n\leq x} e_p(n^3+an)\right| \leq C_0 \sqrt{p} \log\log p.$$
\end{Con}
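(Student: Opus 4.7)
The plan is to derive Conjecture \ref{Incomplete} as a consequence of extending the upper bound of Theorem \ref{Main} beyond its current range. Specifically, if one could prove that $\Phi_p(V)<1/(p-1)$ for some $V=C_0\log\log p$, then the set of $a\in \Fs$ with $\M(a)>V$ must be empty; combined with the Plancherel identity \eqref{Planche}, this would immediately yield the conjectured bound. Since the current upper bound in Theorem \ref{Main} is only valid up to $V\approx (\log\log p)/(\pi/2-\delta)$ and the decay it supplies near the top of the valid range is of the shape $\exp(-C\log p/(\log\log p)^{2(\pi/2-\delta)})$, which is not strong enough to beat $1/p$, the objective is to push both the range and the strength of that bound by the requisite amount.

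The natural approach is to refine the moment method underlying the proof of the upper bound of Theorem \ref{Main}. After truncating \eqref{Planche}--\eqref{Planche2} at $|h|\leq H$ via a short-sum input of the form \eqref{Kloosterman}, one computes the $2k$-th moment
$$\frac{1}{p-1}\sum_{a\in\Fs}\Bigl|\sum_{0<|h|\leq H}\frac{e(\alpha h)-1}{2\pi i h}\,\B(a-h)\Bigr|^{2k}$$
by expanding and invoking the independence of the additive shifts $\B(a-h)$ (via Katz's monodromy/equidistribution theorems for the Birch sheaf) to compare it with the $2k$-th moment of the probabilistic model in which $\B(a-h)$ is replaced by independent Sato--Tate variables $\X(h)$. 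Evaluating the latter precisely (as in Section~6 for the lower bound, through the Laplace transform of $\X$), choosing $k$ a power of $\log p$, and applying Chebyshev's inequality after a union bound over a $1/H$-spaced grid of $\alpha\in[0,1)$, would aim to produce $\Phi_p(V)\leq\exp(-C(\log p)^{1+\eta})$ for some $\eta>0$ whenever $V$ is a sufficiently large constant times $\log\log p$, which is enough to rule out any exceptional $a$.

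The main obstacle is the step where the arithmetic moment is compared to its probabilistic counterpart. The completion error arising from Deligne's bounds accumulates roughly like $H^{2k}/p$ per diagonal contribution; reaching $V$ of order $\log\log p$ forces $2k\gg\log p$, and absorbing this error then pins $H$ close to $p^{1/2}$, at which point the available short-sum input \eqref{Kloosterman} for cubic exponential sums in intervals of length just above $\sqrt{p}$ can no longer control the tail $|h|>H$ of \eqref{Planche}. Overcoming this bottleneck seems to require a genuine sub-Weil short-sum estimate for $\sum_{n\in I}e_p(n^3+an)$ on intervals of length $p^{1/2-\varepsilon}$, presumably via a new $\ell$-adic input of the flavour developed in the Fouvry--Kowalski--Michel program. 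An adaptation of the multiplicative strategy of Bober--Goldmakher--Granville--Koukoulopoulos \cite{BGGK} appears blocked by the absence of a Dirichlet $L$-function identity for Birch sums, so a complete proof of Conjecture \ref{Incomplete} will likely require such a new algebraic-geometric ingredient working in tandem with the probabilistic moment machinery sketched above.
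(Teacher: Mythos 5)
The statement you are addressing is Conjecture~\ref{Incomplete}: the paper does not prove it, and explicitly leaves it open, noting only that Montgomery and Vaughan proved the analogous statement for character sums \emph{conditionally on GRH}, and that a conditional proof of Conjecture~\ref{Incomplete} would be interesting. There is therefore no ``paper proof'' to compare against. What you have written is, appropriately, not a proof at all but a diagnosis of why the paper's moment method falls short, and on the whole it is a fair diagnosis: the moment method proving the upper bound of Theorem~\ref{Main} caps the moment order at $k\ll\log p/\log\log p$ (to keep the algebraic-geometry error from Proposition~\ref{AsympMomentsyz} under control), while pushing $V$ to a constant multiple of $\log\log p$ with $\Phi_p(V)<1/(p-1)$ would require $k$ to be of size comparable to $\log p$; no choice of $k$ satisfies both constraints, so the method stalls.

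Two of your technical details need correction, though, since they could mislead a reader about where exactly the bottleneck lies. First, the error term in Proposition~\ref{AsympMomentsyz} is of shape $p^{-1/2}\bigl(4\sum_{|h|}|c(h)|\bigr)^{2k}\asymp p^{-1/2}(\log p)^{2k}$ because $|c(h)|\ll 1/|h|$, not ``roughly $H^{2k}/p$ per diagonal contribution'': the decay of the coefficients is precisely what keeps the error manageable up to $k\asymp\log p/\log\log p$, and misstating it as $H^{2k}/p$ would make the method appear to fail long before it actually does. Second, the short-sum input \eqref{ShortExponential} is not itself the bottleneck: with $|S|$ allowed up to $\sqrt p$ in Theorem~\ref{MomentsMaxTail}, the relevant intervals in Lemma~\ref{ReductionShort} have length at least $p^{1/2}$, where Weyl differencing already gives $\ll p^{3/8+\varepsilon}=o(\sqrt p)$. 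The obstruction is entirely in the arithmetic-vs-probabilistic moment comparison (the $(\log p)^{2k}/\sqrt p$ term capping $k$), not in the range of the short-sum estimate for Birch sums; the short-sum issue becomes the active bottleneck only in the Kloosterman variant, where even Weyl's bound is unavailable, which is why the paper singles out \eqref{Kloosterman} as the missing ingredient there. Your concluding suggestion that a genuinely new ingredient (e.g.\ a conditional one in the spirit of Montgomery--Vaughan, or stronger uniformity in the $\ell$-adic moment formula) is required, rather than a refinement of the present moment machinery, matches the paper's own assessment.
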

Montgomery and Vaughan \cite{MV} proved the analogue of this conjecture for character sums assuming the generalized Riemann hypothesis (GRH) for Dirichlet $L$ functions. It would be interesting to prove Conjecture \ref{Incomplete} conditionally on some unproven but widely believed hypotheses such as the GRH. 

The paper is organized as follows: In Section 2 we give an outline and present the main ingredients of the proof of the upper bound of Theorem \ref{Main}. In Section 3 we  use harmonic analysis techniques to obtain a non-trivial bound for a ``random'' exponential sum. In Section 4, we investigate the moments of sums of Birch sums using ingredients from algebraic geometry.  The proof of the upper bound of Theorem \ref{Main} will then be completed in Section 5. Finally, in Section 6 we prove the lower bound of Theorem \ref{Main}. 

\

\textbf{Acknowledgments.} 
 I would like to thank Corentin Perret-Gentil for useful comments and suggestions concerning the generalization of Theorem \ref{Main} to other trace functions. I thank Dante Bonolis for informing me about his current work on the size and moments of incomplete Kloosterman and Birch sums. I also thank Alexey Kuznetsov for helpful discussions.

\section{Proof the upper bound in Theorem \ref{Main}: Strategy and key ingredients}

We shall first transform the problem of bounding $\M(a)$ into a discrete problem involving bounding the maximum of  the partial sums $\sum_{n\leq x} e_p(n^3+an)$ over  a ``small'' number of the $x$'s. This is accomplished using the following bound for short exponential sums which follows from Weyl's method (see for example Lemma 20.3 of \cite{IK})
\begin{equation}\label{ShortExponential}
\sum_{n\in I}e_p(n^3+an) \ll_{\varepsilon} |I|^{1/4+\varepsilon} p^{1/4}, 
\end{equation}
for any interval $I\subset [1, p]$. We split the interval $[1, p]$ into $L$ intervals $I_{\ell}:=[x_{\ell}, x_{\ell+1}]$ where $x_0:=1$, $x_{L}:=p$, and for $1\leq \ell \leq L-1$, we define 
\begin{equation}\label{xl}
x_{\ell}:=\frac{\ell p}{L}.
\end{equation}
Using \eqref{ShortExponential} we prove
\begin{lem}\label{ReductionShort}
Let $p$ be a large prime and $L=p^{1/8}$. For all $a\in \Fs$, we have 
$$ \M(a)= \max_{0\leq \ell \leq L-1} \frac{1}{\sqrt{p}} \left|\sum_{0\leq n\leq x_{\ell}} e_p(n^3+an)\right| + O\left(p^{-1/50}\right).$$ 
\end{lem}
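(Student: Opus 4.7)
The strategy is to exploit the fact that consecutive sample points $x_\ell$ are separated by $p/L = p^{7/8}$, a range short enough that the Weyl bound \eqref{ShortExponential} gives a power savings over the trivial bound.

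First, I would fix $a \in \Fs$ and let $y$ be an integer in $[0,p)$ at which the maximum defining $\M(a)$ is attained. Choose $\ell \in \{0,1,\dots,L-1\}$ with $x_\ell \leq y < x_{\ell+1}$, and write
\[
\sum_{0 \leq n \leq y} e_p(n^3+an) = \sum_{0 \leq n \leq x_\ell} e_p(n^3+an) + \sum_{x_\ell < n \leq y} e_p(n^3+an).
\]
The second sum runs over an interval of length at most $p/L = p^{7/8}$.

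Second, I would apply the Weyl-type bound \eqref{ShortExponential} to the tail. With $|I| \leq p^{7/8}$, this yields
\[
\sum_{x_\ell < n \leq y} e_p(n^3+an) \ll_\varepsilon \bigl(p^{7/8}\bigr)^{1/4+\varepsilon} p^{1/4} = p^{15/32 + 7\varepsilon/8}.
\]
Dividing by $\sqrt{p} = p^{1/2}$, the contribution of the tail to $\M(a)$ is $\ll p^{-1/32 + 7\varepsilon/8}$. Choosing $\varepsilon$ sufficiently small (any $\varepsilon \leq 1/100$, say) makes this $\ll p^{-1/50}$, uniformly in $a$ and in the choice of $y$.

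This bound is uniform in the point $y$, so taking the maximum on the left over all $y < p$ gives
\[
\M(a) \leq \max_{0 \leq \ell \leq L-1} \frac{1}{\sqrt{p}}\left|\sum_{0 \leq n \leq x_\ell} e_p(n^3+an)\right| + O\bigl(p^{-1/50}\bigr).
\]
The reverse inequality is trivial since the maximum on the right is taken over a subset of the points competing in the definition of $\M(a)$. Combining the two directions yields the claimed equality. There is no real obstacle here; the only point requiring care is the arithmetic of the exponents, namely that $L = p^{1/8}$ is chosen precisely so that $(p/L)^{1/4}p^{1/4}/p^{1/2} = p^{-1/32}$ gives a comfortable margin over the stated error $p^{-1/50}$.
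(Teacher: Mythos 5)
Your proof is correct and follows essentially the same route as the paper: both split the sum at the nearest sample point $x_\ell$ below the maximizing index and apply the Weyl-type bound \eqref{ShortExponential} to the short tail of length at most $p/L = p^{7/8}$. You spell out the exponent arithmetic a bit more explicitly than the paper does, but the decomposition and the key estimate are identical.
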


\begin{proof} The implicit lower bound is trivial, so it remains to prove the implicit upper bound. For each $a\in \Fs$ let $j(a)$ be that integer in $[1, p]$ for which 
$$\M(a)=\frac{1}{\sqrt{p}} \left|\sum_{0\leq n\leq j(a)} e_p(n^3+an)\right|.$$
Then $j(a)\in I_{\ell}$ for some $0\leq \ell \leq L-1$, and hence we have 
\begin{equation}\label{BoundM}
\M(a)\leq \frac{1}{\sqrt{p}} \left|\sum_{0\leq n\leq x_{\ell}} e_p(n^3+an)\right|+ \frac{1}{\sqrt{p}} \left|\sum_{x_{\ell}\leq n\leq j(a)} e_p(n^3+an)\right|.
\end{equation}
Now, we use the bound \eqref{ShortExponential} to obtain
$$ \frac{1}{\sqrt{p}} \left|\sum_{x_{\ell}\leq n\leq j(a)} e_p(n^3+an)\right| \ll_{\varepsilon} p^{-1/4} |I_{\ell}|^{1/4+\varepsilon}\ll p^{-1/50}.$$
Inserting this estimate in \eqref{BoundM} completes the proof.
\end{proof}

Combining Lemma \ref{ReductionShort} with equations \eqref{Planche} and \eqref{Planche2} we obtain 
\begin{equation}\label{Upper2}
\M(a)
\leq  
\frac{1}{2\pi} \max_{0\leq \ell \leq L-1} \left|\sum_{1\leq |h|<p/2} \frac{e_p\left(x_{\ell} h\right)- 1}{h} \B(a-h)\right| +O(1).
\end{equation}
In order to bound the distribution function of $\M(a)$, a standard approach is to bound the moments of $\max_{0\leq \ell \leq L-1} \left|\sum_{1\leq |h|<p/2} \frac{e_p\left(x_{\ell} h\right)- 1}{h} \B(a-h)\right|$. However, it turns out that a more efficient method is to truncate this sum at a parameter $1\leq H\leq p$, and treat the terms $\B(a-h)$ for $1\leq |h|\leq H$ as random points in $[-2, 2]$. This gives \begin{equation}\label{Upper}
\M(a)\leq  
\frac{\mathcal{G}(H)}{2\pi} + \frac{1}{2\pi} \max_{0\leq \ell \leq L-1} \left|\sum_{H<|h|<p/2} \frac{e_p\left(x_{\ell} h\right)- 1}{ h} \B(a-h)\right| +O(1),
\end{equation}
where 
$$\mathcal{G}(H):=\max_{\alpha\in [0, 1)} \max_{(y_{-H}, \dots y_{-1}, y_1,\dots,  y_{H})\in [-2, 2]^{2H}}
 \left|\sum_{1\leq |h|\leq H} \frac{e(\alpha h)-1}{h} y_h\right|.
 $$
In Section 3, we will investigate the quantity $\G(H)$ and obtain a non-trivial upper bound for it. More precisely, we shall prove
\begin{thm}\label{BOUNDGH}
Let $H$ be a positive integer. Then, we have 
$$ \G(H)\leq \left(2+\frac{8}{\pi}\right) \log H+O(1).$$
\end{thm}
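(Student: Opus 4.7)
The plan is a three-step reduction: separate the inner sum into real and imaginary parts, exploit the constraint $y_{\pm h}\in[-2,2]$ to obtain a ``budget'' inequality, and then reduce the theorem to a uniform Fourier-analytic bound on a weighted sum.

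First I would decompose
\[
\frac{e(\alpha h)-1}{h}=\frac{\cos(2\pi\alpha h)-1}{h}+i\,\frac{\sin(2\pi\alpha h)}{h},
\]
noting that the real part is \emph{odd} in $h$ while the imaginary part is \emph{even}. Setting $u_h:=y_h-y_{-h}$ and $v_h:=y_h+y_{-h}$, pairing the $h$-th and $(-h)$-th terms yields
\[
\sum_{1\le |h|\le H}\frac{e(\alpha h)-1}{h}\,y_h\;=\;-\sum_{h=1}^{H}\frac{1-\cos(2\pi\alpha h)}{h}\,u_h\;+\;i\sum_{h=1}^{H}\frac{\sin(2\pi\alpha h)}{h}\,v_h.
\]
The key observation is the \emph{budget constraint} $|u_h|+|v_h|=2\max(|y_h|,|y_{-h}|)\le 4$. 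Combining $|X+iY|\le |X|+|Y|$ with the elementary inequality $a|u|+b|v|\le \max(a,b)(|u|+|v|)$ for $a,b\ge 0$, I obtain
\[
\left|\sum_{1\le|h|\le H}\frac{e(\alpha h)-1}{h}y_h\right|\;\le\;4\sum_{h=1}^{H}\frac{g(2\pi\alpha h)}{h},\qquad g(\theta):=\max\bigl(1-\cos\theta,\;|\sin\theta|\bigr).
\]

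Now the theorem reduces to the uniform estimate
\[
\sum_{h=1}^{H}\frac{g(2\pi\alpha h)}{h}\;\le\;\mu\log H+O(1),\qquad \mu:=\frac{1}{2\pi}\int_0^{2\pi}g(\theta)\,d\theta\;=\;\frac{1}{2}+\frac{2}{\pi}.
\]
The value of $\mu$ follows from the piecewise description $g(\theta)=|\sin\theta|$ on $[0,\pi/2]\cup[3\pi/2,2\pi]$ and $g(\theta)=1-\cos\theta$ on $[\pi/2,3\pi/2]$, which yields $\int_0^{2\pi}g=\pi+4$. Multiplying the estimate by $4$ gives exactly $\G(H)\le (2+8/\pi)\log H+O(1)$, matching the claimed constant. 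For the uniform bound I would Fourier-expand $g(\theta)=\mu+\sum_{k\ge 1}c_k\cos(k\theta)$; since $g$ is continuous, even, and smooth on each of its three pieces with jump discontinuities only in $g'$ at $\theta=0,\pi/2,3\pi/2$, two integrations by parts give $|c_k|\ll 1/k^2$. Substituting,
\[
\sum_{h=1}^{H}\frac{g(2\pi\alpha h)}{h}\;=\;\mu\log H+O(1)+\sum_{k\ge 1}c_k\sum_{h=1}^{H}\frac{\cos(2\pi k\alpha h)}{h},
\]
and the inner cosine sum is controlled by the classical estimate $\sum_{h=1}^{H}\cos(2\pi\beta h)/h=-\log|2\sin\pi\beta|+O((H\|\beta\|)^{-1})$ when $\|\beta\|\ge 1/H$, together with the trivial $O(\log H)$ bound otherwise.

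The main obstacle is extracting an $O(1)$ bound, uniformly in $\alpha$, for the double sum $\sum_k c_k\sum_h\cos(2\pi k\alpha h)/h$. The trouble is that for each individual $k$ with $\|k\alpha\|$ abnormally small the inner sum can grow like $\log H$, so one must exploit the $1/k^2$ decay of $c_k$ to show that only an absolutely bounded number of such ``bad'' $k$ can contribute meaningfully, independently of the Diophantine nature of $\alpha$. This is where the uniformity of the constant $\mu$ in the final estimate ultimately comes from.
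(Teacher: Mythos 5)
Your reduction to the bound $\G(H)\leq 4\max_\alpha\sum_{h\leq H}g(2\pi\alpha h)/h$ is correct, and the function $g(\theta)=\max(1-\cos\theta,|\sin\theta|)$ coincides with the one in the paper's Lemma~\ref{FHg}; your derivation via the budget identity $|u_h|+|v_h|=2\max(|y_h|,|y_{-h}|)\leq 4$ is a tidy alternative to the paper's direct computation of $|f_\beta(x,y)|^2$, and yields the same bound. The mean value $\mu=\tfrac12+\tfrac2\pi=a_0/2$ also matches, so after multiplying by $4$ the stated constant $2+8/\pi$ comes out correctly.

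The gap is in the final step, and the route you sketch would not close it. You correctly note that for $k$ with $\|k\alpha\|$ small the inner cosine sum $\sum_{h\le H}\cos(2\pi k\alpha h)/h\approx -\log|1-e(k\alpha)|$ can be as large as $\log H$. But this cannot be tamed by showing that only boundedly many bad $k$'s contribute: take $\alpha\asymp 1/H$, and then the $k=1$ term alone contributes $c_1(\log H+O(1))$; since $c_1=a_1=-\tfrac1\pi-\tfrac12\neq0$, a \emph{single} bad $k$ already produces an unbounded contribution in absolute value, no matter how few bad $k$'s there are. What rescues the argument is a \emph{sign} observation, not a counting one. The explicit computation of the Fourier coefficients of $g$ (the paper's \eqref{FourierCoefficients}, together with $a_1=-\tfrac1\pi-\tfrac12$) shows that $a_n\le 0$ for every $n\ge1$. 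Since the inner cosine sum is bounded \emph{below} by $-\log 2-O(1)$ uniformly in $\alpha$, the non-positivity gives $c_k\cdot\sum_h\cos(2\pi k\alpha h)/h\le|c_k|(\log 2+O(1))$ term by term, whence the double sum is $\le\sum_k|c_k|\cdot O(1)=O(1)$ by the $1/k^2$ decay. Without establishing $c_k\le 0$ for all $k\ge1$, the argument remains incomplete.
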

It remains now to bound the moments of the ``tail''
\begin{equation}\label{TailBirch}
\max_{0\leq \ell \leq L-1} \left|\sum_{H<|h|<p/2} \frac{e_p\left(x_{\ell} h\right)- 1}{ h} \B(a-h)\right|.
\end{equation}
To this end, we shall use the recent work of Perret-Gentil \cite{Pe1} which relies on deep tools from algebraic geometry, in order to investigate the moments of sums of the form $\sum_{y\le |h|<z} c(h) \B(a-h)$, where $\{c(h)\}_{h\in \mathbb{Z}}$ is a sequence of complex numbers. This will be carried out in Section 4. 
Using these results, we shall establish the following theorem in Section 5.
\begin{thm}\label{MomentsMaxTail}
Let $p$ be a large prime, and $k$ be a large positive integer such that $k\leq (\log p)/(100\log\log p)$. Let $S$ be a non-empty subset of $[0, 1)$ such that $|S|\leq \sqrt{p}$, and put $y=10^5 k$. Then we have
$$ 
\frac{1}{p-1}\sum_{a\in \Fs} \max_{\alpha\in S}\left|\sum_{y\le |h|<p/2} \frac{e\left(\alpha h\right)- 1}{h} \B(a-h)\right|^{2k} \ll e^{-2k} + \frac{|S|(4\log p)^{10k}}{\sqrt{p}}.
$$
\end{thm}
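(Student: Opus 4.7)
The plan is to combine a union bound over $\alpha\in S$ with a $2k$-th moment estimate for each fixed $\alpha$, relying on the algebraic geometry input of Section~4 to replace the average over $a\in\Fs$ by the corresponding expectation in the independent Sato-Tate model, and then bounding the latter via a Khintchine-type inequality.

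Writing $T(\alpha,a):=\sum_{y\le |h|<p/2} c(h,\alpha)\B(a-h)$ with $c(h,\alpha):=(e(\alpha h)-1)/h$, the first step is the trivial union bound
\[
\frac{1}{p-1}\sum_{a\in\Fs}\max_{\alpha\in S}|T(\alpha,a)|^{2k}\leq \sum_{\alpha\in S} M_k(\alpha), \qquad M_k(\alpha):=\frac{1}{p-1}\sum_{a\in\Fs}|T(\alpha,a)|^{2k},
\]
so it suffices to bound $M_k(\alpha)$ uniformly in $\alpha$. The bound $|c(h,\alpha)|\le 2/|h|$ yields $\sum_{|h|\ge y}|c(h,\alpha)|^2\le 8/y$ and $\sum_{|h|\ge y}|c(h,\alpha)|\le 4\log p$. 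Expanding $|T(\alpha,a)|^{2k}$ into a $2k$-fold sum and using that $\B(\cdot)$ is real-valued reduces the problem to estimating the averages $\frac{1}{p-1}\sum_{a}\prod_{j=1}^{2k}\B(a-g_j)$ over $2k$-tuples $\mathbf{g}=(g_1,\dots,g_{2k})$.

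The central input, which is the main content of Section~4 and rests on Perret-Gentil's algebraic geometry framework, should take the form
\[
\frac{1}{p-1}\sum_{a\in\Fs}\prod_{j=1}^{2k}\B(a-g_j)=\mu(\mathbf{g})+O\!\left(\frac{D^{2k}}{\sqrt{p}}\right),
\]
where $D$ is an absolute constant and $\mu(\mathbf{g}):=\ex\!\bigl[\prod_j \X(g_j)\bigr]$ is the analogous expectation in the independent Sato-Tate model. Since $\ex[\X]=0$, the main term $\mu(\mathbf{g})$ vanishes unless every value in the multiset $\{g_j\}$ appears at least twice. Summing those main contributions reconstructs
\[
\ex\Big[\Big|\sum_{y\le |h|<p/2}c(h,\alpha)\X(h)\Big|^{2k}\Big],
\]
a moment of a sum of independent, bounded, mean-zero random variables, which a classical Khintchine-type inequality bounds by $(Ck\sigma^2)^k\le(8C/10^5)^k$, since $\sigma^2\le 8/y$ and $y=10^5 k$. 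The contribution of the error terms is at most $D^{2k}\bigl(\sum_h |c(h,\alpha)|\bigr)^{2k}/\sqrt{p}\ll (4\log p)^{10k}/\sqrt{p}$, the exponent $10$ accommodating $D$ together with the combinatorial overhead.

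Putting the two estimates together yields $M_k(\alpha)\ll (8C/10^5)^k+(4\log p)^{10k}/\sqrt{p}$, and summing over $\alpha\in S$ produces the stated bound; the numerical constant $10^5$ in the choice $y=10^5 k$ is calibrated precisely so that $|S|(8C/10^5)^k$ is absorbed into $e^{-2k}$ throughout the admissible range $|S|\le\sqrt{p}$. The hardest part will be the algebraic geometry input of Section~4: one must prove a uniform moment estimate for $\sum_a\prod_j\B(a-g_j)$ that is robust to \emph{every} coincidence pattern in $\mathbf{g}$, and one must sum the resulting errors carefully enough to produce the factor $(4\log p)^{10k}$ (rather than a naive $(\log p)^{2k}$) on the power-saving term. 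This relies on Deligne's bounds for $\ell$-adic trace functions and Katz's detailed analysis of the sheaves attached to Birch sums, packaged within the ``coherent family'' formalism so that shifted copies of the Birch sum sheaf behave essentially independently.
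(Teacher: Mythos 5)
Your proposal correctly identifies the two main ingredients from Section~4 (the transfer from $\frac{1}{p-1}\sum_a \prod_j \B(a-g_j)$ to $\ex\prod_j\X(g_j)$ up to a $p^{-1/2}$ error, and the Khintchine-type bound for the resulting moment of a sum of independent mean-zero bounded variables), and these are indeed what the paper builds on (Lemma~\ref{Algebraic}, Proposition~\ref{AsympMomentsyz}, Lemma~\ref{BOUNDRAND}, Corollary~\ref{BoundMomentsyz}). However, the way you handle the $\max_{\alpha\in S}$ contains a genuine gap that cannot be patched without an additional idea.

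You propose the trivial union bound $\frac{1}{p-1}\sum_a\max_{\alpha\in S}|T(\alpha,a)|^{2k}\leq\sum_{\alpha\in S}M_k(\alpha)$ and then bound $M_k(\alpha)$ uniformly by $(8C/10^5)^k+(4\log p)^{10k}/\sqrt{p}$. This yields a contribution $|S|(8C/10^5)^k$ from the main term, and you claim this is ``absorbed into $e^{-2k}$ throughout the admissible range $|S|\leq\sqrt{p}$.'' That claim is false. Take $|S|$ near $\sqrt{p}$ and $k$ near its upper limit $(\log p)/(100\log\log p)$: then $(8C/10^5)^k=\exp(-c'k)=p^{-c'/(100\log\log p)}=p^{-o(1)}$, so $|S|(8C/10^5)^k\gg p^{1/2-o(1)}$, which is enormous, not $O(e^{-2k})$. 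The same problem already occurs for much smaller $k$, e.g.\ $k\asymp\log\log p$. The issue is structural: after fixing $y=10^5k$, the variance of the entire tail $\sum_{y\leq|h|<p/2}$ is $\asymp 1/y\asymp 1/k$, so the best moment bound one can expect is $(Ck\sigma^2)^k$, a \emph{constant}-to-the-$k$ power with no dependence on $p$ or on $|S|$. A union bound over $|S|$ points can never overcome a $\sqrt{p}$-size set with such a bound.

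The paper avoids this by \emph{not} applying a union bound over all of $S$ at once. Instead, it splits the $h$-range into a short block $[y,k^2)$ (Proposition~\ref{MomentsMaxTail1}) and dyadic blocks $[z_j,z_{j+1})$ with $z_j\asymp 2^j$ covering $[k^2,p/2)$ (Proposition~\ref{MomentsMaxTail2}), and on each block it replaces $S$ by a discrete net of size matched to the block's resolution ($k^4$ for the short block, $4^j$ for the $j$-th dyadic block), using $|e(\alpha h)-e(\beta h)|\ll|h|\cdot|\alpha-\beta|$ to control the discretization error. The point is that on a dyadic block at scale $2^j$ the moment bound sharpens to roughly $(Ck/2^j)^k$, so the net size $4^j$ is dominated by $(2^j/(Ck))^k$ once $k\geq 3$, and the contributions sum geometrically; the short block is handled because $k^4(64/10^5)^k\ll e^{-4k}$. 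This adaptive-net-plus-dyadic decomposition is the essential idea your argument is missing, and without it the theorem does not follow.
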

This result gives non-trivial bounds for these moments only when $|S|\leq p^{1/2-\varepsilon}$ for some $\varepsilon>0$. Thus, in view of Lemma \ref{ReductionShort} we need strong bounds for short exponential sums, which are supplied in our case by \eqref{ShortExponential}. Such bounds are not currently known for  Kloosterman sums. However, if we assume \eqref{Kloosterman} then the proof of the upper bound of Theorem \ref{Main} will extend to this case. 

With Theorems \ref{BOUNDGH} and \ref{MomentsMaxTail} now in place, we are ready to prove the upper bound of Theorem \ref{Main}.

\begin{proof}[Proof of the upper bound of Theorem \ref{Main}]
First, it follows from \eqref{Upper2} that there exists a set $S\subset [0, 1)$ with $|S|=p^{1/8}$ such that 
$$ \M(a)\leq \frac{1}{2\pi} \max_{\alpha\in S} \left|\sum_{1\leq |h|<p/2} \frac{e\left(\alpha h\right)- 1}{h} \B(a-h)\right| +O(1).$$
Let $k \leq (\log p)/(100\log\log p)$ be a large positive integer to be chosen, and put $y=10^5k$. Then, it follows from Theorem \ref{BOUNDGH} that 
$$ \M(a) \leq \left(\frac{1}{\pi}+\frac{4}{\pi^2}\right)\log k+ \frac{1}{2\pi} \max_{\alpha\in S} \left|\sum_{y\leq |h|<p/2} \frac{e\left(\alpha h\right)- 1}{h} \B(a-h)\right| +C_0,$$
for some positive constant $C_0$. Note that $1/(1/\pi +4/\pi^2))= \pi/2-\delta$.  We now choose $k=[C_1\exp((\pi/2-\delta)V)]$, where $C_1= \exp(-4 (C_0+1))$. Then, it follows from Theorem \ref{MomentsMaxTail} that\begin{align*}
\Phi_p(V) 
&\leq \frac{1}{p-1} \left|\left\{a\in \Fs: \max_{\alpha\in S} \left|\sum_{y\leq |h|<p/2} \frac{e\left(\alpha h\right)- 1}{h} \B(a-h)\right|\geq 1\right\}\right|\\
& \leq \frac{1}{p-1} \sum_{a\in \Fs} \max_{\alpha\in S} \left|\sum_{y\leq |h|<p/2} \frac{e\left(\alpha h\right)- 1}{h} \B(a-h)\right|^{2k} \\
& \ll e^{-2k} + \frac{(4\log p)^{10k}}{p^{3/8}} \ll \exp\left(-C_1\exp\left(\left(\frac{\pi}{2}-\delta\right)V\right)\right),
\end{align*}
which completes the proof.
\end{proof}

\section{A non-trivial upper bound for $\mathcal{G}(H)$: proof of Theorem \ref{BOUNDGH} }\label{GH}
Recall that
$$\mathcal{G}(H)=\max_{\alpha\in [0, 1)} \max_{(y_{-H}, \dots y_{-1}, y_1,\dots,  y_{H})\in [-2, 2]^{2H}}
 \left|\sum_{1\leq |h|\leq H} \frac{e(\alpha h)-1}{h} y_h\right|.
 $$
One can easily drive the following ``trivial'' bounds 
\begin{equation}\label{TrivialBound} 4\log H +O(1)\leq  \G(H) \leq 8 \log H +O(1),
\end{equation}
where the lower bound is obtained by taking $\alpha=1/2$, and the upper bound follows from the fact that 
$|(e(\alpha h) -1) y_h|\leq 4$. It is an interesting problem to obtain an asymptotic formula for $\G(H)$ as $H\to \infty$. The purpose of this section is to prove Theorem \ref{BOUNDGH},  which gives a non-trivial upper bound for $\G(H)$. We start with the following lemma.
 \begin{lem}\label{FHg}
 Let $H$ be a positive integer. Then, we have 
 $$ \G(H)\leq 4\max_{\alpha\in [0, 1)}\sum_{1\leq h\leq H} \frac{g(2\pi\alpha h)}{h}.$$
 where $g$ is the $2\pi$-periodic non-negative continuous function defined on $[0, 2\pi]$ by 
$$ 
g(t):=\begin{cases} \sin(t) & \text{ if } 0\leq t\leq \pi/2,\\ 
1-\cos(t) & \text{ if } \pi/2<t< 3\pi/2,\\ -\sin(t) & \text{ if } 3\pi/2\leq t\leq 2\pi.
\end{cases}
$$
\end{lem}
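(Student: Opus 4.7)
The plan is to reduce the maximization over $(y_h)\in[-2,2]^{2H}$ to an explicit trigonometric optimization, and then compare the resulting expression to $g$ pointwise. First I would use the duality $|z|=\max_{\phi\in[0,2\pi)}\re(e^{-i\phi}z)$ and interchange maxima to write
\[
\G(H) = \max_{\alpha,\phi}\,\max_{(y_h)\in[-2,2]^{2H}}\sum_{1\le|h|\le H}\frac{\re\!\bigl(e^{-i\phi}(e(\alpha h)-1)\bigr)}{h}\,y_h.
\]
Since the coefficients of the $y_h$ are now real, the inner maximum is attained at $y_h=\pm 2$ and equals $2\sum_{1\le|h|\le H}\bigl|\re(e^{-i\phi}(e(\alpha h)-1))\bigr|/|h|$.

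Next I would exploit the identity $\cos A-\cos B=-2\sin\!\frac{A+B}{2}\sin\!\frac{A-B}{2}$, applied with $A=2\pi\alpha h-\phi,\ B=\phi$ for the $h>0$ terms, and with $A=2\pi\alpha h+\phi,\ B=\phi$ for the $h<0$ terms (after setting $|h|=m$), to obtain
\[
\bigl|\re(e^{-i\phi}(e(\pm\alpha h)-1))\bigr| = 2|\sin(\pi\alpha h)|\cdot|\sin(\pi\alpha h\mp\phi)|.
\]
Combining the positive and negative $h$ contributions yields the identity
\[
\G(H) = 4\max_{\alpha,\phi}\sum_{h=1}^{H}\frac{|\sin(\pi\alpha h)|\,\bigl(|\sin(\pi\alpha h-\phi)|+|\sin(\pi\alpha h+\phi)|\bigr)}{h}.
\]
I would then majorize by pulling the maximum over $\phi$ inside the $h$-sum, and for each $\theta=\pi\alpha h$ evaluate $M(\theta):=\max_\phi\bigl(|\sin(\theta-\phi)|+|\sin(\theta+\phi)|\bigr)$. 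Using $(|A|+|B|)^2\le 2(A^2+B^2)$ together with the identity $\sin^2(\theta-\phi)+\sin^2(\theta+\phi)=1-\cos(2\theta)\cos(2\phi)$, the maximum is attained at $\phi=0$ (when $\cos(2\theta)\le 0$) or $\phi=\pi/2$ (when $\cos(2\theta)\ge 0$), giving $M(\theta)=2\max(|\sin\theta|,|\cos\theta|)$.

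Finally, I would verify the piecewise trigonometric identity
\[
2|\sin(t/2)|\cdot\max\!\bigl(|\sin(t/2)|,|\cos(t/2)|\bigr) = g(t), \qquad t\in[0,2\pi].
\]
For $t\in[0,\pi/2]$ one has $|\sin(t/2)|\le\cos(t/2)$, so the left side equals $2\sin(t/2)\cos(t/2)=\sin t$; for $t\in[\pi/2,3\pi/2]$ one has $|\sin(t/2)|\ge|\cos(t/2)|$, so the left side equals $2\sin^2(t/2)=1-\cos t$; and for $t\in[3\pi/2,2\pi]$ we again have $|\cos(t/2)|\ge|\sin(t/2)|$ but now $\cos(t/2)<0$, giving $-2\sin(t/2)\cos(t/2)=-\sin t$. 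Each case matches the piecewise definition of $g$. Substituting $t=2\pi\alpha h$ (so $\theta=t/2=\pi\alpha h$) in the bound of the previous paragraph yields $\G(H)\le 4\max_\alpha\sum_{h=1}^{H} g(2\pi\alpha h)/h$, as required. The only step that is not an identity is the relaxation of pulling $\max_\phi$ inside the sum; all other manipulations are exact, and the main bookkeeping obstacle is the piecewise verification that the trigonometric quantity collapses to $g$, which explains the somewhat unusual form of $g$.
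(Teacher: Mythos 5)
Your proof is correct, and it takes a genuinely different route from the paper's. The paper groups the conjugate pair $(h,-h)$, applies the triangle inequality to get $\G(H)\le\max_\alpha\max_{(y_h)}\sum_{h=1}^H|f_{2\pi\alpha h}(y_h,y_{-h})|/h$ with $f_\beta(x,y)=(e^{i\beta}-1)x+(1-e^{-i\beta})y$, and then bounds $|f_\beta(x,y)|^2$ over the square $[-2,2]^2$ by a direct quadratic computation, with the case split on $\operatorname{sgn}(\cos\beta)$ producing the two pieces $16\sin^2\beta$ and $16(1-\cos\beta)^2$. You instead use the duality $|z|=\max_\phi\re(e^{-i\phi}z)$ and interchange of maxima to obtain an \emph{exact} formula $\G(H)=4\max_{\alpha,\phi}\sum_{h=1}^H|\sin(\pi\alpha h)|\bigl(|\sin(\pi\alpha h-\phi)|+|\sin(\pi\alpha h+\phi)|\bigr)/h$, so that the only lossy step is pulling $\max_\phi$ inside the $h$-sum; you then resolve the inner optimization via the $\cos A-\cos B$ and $\sin^2(\theta-\phi)+\sin^2(\theta+\phi)=1-\cos 2\theta\cos 2\phi$ identities. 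Both routes bound the same per-$h$ quantity and land on the same $g$, but yours isolates the single inequality used (a single global phase $\phi$ versus a different phase for each $h$) and yields the intermediate exact formula as a bonus, while the paper's per-pair maximization over $(y_h,y_{-h})\in[-2,2]^2$ is shorter but makes the source of the loss (a triangle inequality) less transparent. All the identities you use check out: the attainability of $M(\theta)=2\max(|\sin\theta|,|\cos\theta|)$ at $\phi=0$ or $\pi/2$, and the piecewise verification $2|\sin(t/2)|\max(|\sin(t/2)|,|\cos(t/2)|)=g(t)$, are both correct.
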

\begin{proof}
Let $\alpha \in [0, 1)$ and $(y_{-H}, \dots y_{-1}, y_1,\dots,  y_{H})\in [-2, 2]^{2H}$. Then, we have 
\begin{equation}\label{Grouph}
\begin{aligned}
 \left|\sum_{1\leq |h|\leq H} \frac{e(\alpha h)-1}{h} y_h\right|& = \left|\sum_{1\leq h\leq H}\left( \frac{e(\alpha h)-1}{h} y_h+ \frac{1-e(-\alpha h)}{h} y_{-h}\right)\right| \\
 & \leq \sum_{1\leq h\leq H} \frac{|f_{2\pi \alpha h}(y_h, y_{-h})|}{h},
 \end{aligned}
 \end{equation}
 where
$$ f_{\beta}(x, y) = (e^{i\beta}-1)x+(1-e^{-i\beta})y,$$
for $\beta\in \mathbb{R}$ and $(x, y)\in [-2, 2]^2$. Morover, we note that
\begin{align*}
|f_{\beta}(x, y)|^2&= (\cos(\beta)-1)^2(x-y)^2+\sin(\beta)^2 (x+y)^2\\
&= 2(x^2+y^2) (1-\cos(\beta))+ 4xy \cos(\beta) (1-\cos(\beta)).
\end{align*}
Therefore, if $(x, y) \in [-2, 2]$ and $\cos(\beta)\geq 0$ then
$$|f_{\beta}(x, y)|^2 \leq 16 (1-\cos(\beta))+ 16\cos(\beta) (1-\cos(\beta))= 16 \sin(\beta)^2,$$ while
if $\cos(\beta)<0$, then
$$|f_{\beta}(x, y)|^2 \leq 16 (1-\cos(\beta))- 16\cos(\beta) (1-\cos(\beta))= 16 (1-\cos(\beta))^2.$$
Thus, in both case we deduce that $|f_{\beta}(x, y)|\leq 4 g(\beta)$ for all $(x, y)\in [-2, 2]$. Inserting this bound in \eqref{Grouph} completes the proof.
\end{proof}
To estimate the sum on the right hand side of Lemma \ref{FHg} we shall use the Fourier series expansion of the function $g$. Let $a_n$, $b_n$ be the Fourier coefficients of $g$, defined by 
$$ a_n:=\frac{1}{\pi} \int_{-\pi}^{\pi} g(t) \cos(nt) dt \text{ for } n\geq 0,$$ and 
$$b_n:=\frac{1}{\pi} \int_{-\pi}^{\pi} g(t) \sin(nt) dt \text{ for } n\geq 1.
$$
Since $g$ is even we have $b_n=0$ for all $n\geq 1$, and 
\begin{align*}
a_n &= \frac{2}{\pi} \int_{0}^{\pi} g(t) \cos(nt) dt\\
&=\frac{2}{\pi}\int_{0}^{\pi/2}\sin(t) \cos(nt)dt+\frac{2}{\pi}\int_{\pi/2}^{\pi} \cos(nt)dt - \frac{2}{\pi}\int_{\pi/2}^{\pi}\cos(t) \cos(nt)dt. 
\end{align*}
When $n=0$ we have  
$$ a_0= \frac{2}{\pi}\int_{0}^{\pi/2}\sin(t)dt+ 1 - \frac{2}{\pi}\int_{\pi/2}^{\pi}\cos(t) dt= 1+\frac{4}{\pi},$$
while for $n\geq 1$ we have 
\begin{align*}
a_n&= \frac{1}{\pi}\int_{0}^{\pi/2}\big(\sin((n+1) t)-\sin((n-1)t)\big)dt-\frac{2\sin(n\pi/2)}{n \pi}\\ 
&  \ \ \ \ -\frac{1}{\pi}\int_{\pi/2}^{\pi}\big(\cos((n+1) t)+\cos((n-1)t)\big)dt.\\
\end{align*}
Hence, an easy calculation shows that $a_1= -\frac{1}{\pi}-\frac{1}{2}$ and 
and for $n\geq 2$ we have
\begin{equation}\label{FourierCoefficients}
\begin{aligned}
a_n
&= \frac{1-\cos((n+1)\pi/2)}{(n+1)\pi} - \frac{1-\cos((n-1)\pi/2)}{(n-1)\pi} -\frac{2\sin(n\pi/2)}{n \pi}\\ &  \ \ \ \ + \frac{\sin((n+1)\pi/2)}{(n+1)\pi} +\frac{\sin((n-1)\pi/2)}{(n-1)\pi}\\
&=\begin{cases} -\frac{4}{(n^2-1)\pi} & \text{ if } n\equiv 0 \bmod 4,\\
-\frac{2}{n(n+1) \pi} & \text{ if } n\equiv 1 \bmod 4,\\
0 & \text{ if } n\equiv 2 \bmod 4,\\
-\frac{2}{n(n-1) \pi} & \text{ if } n\equiv 3  \bmod 4.\\
\end{cases}
\end{aligned}
\end{equation}
Finally, since $a_n\ll 1/n^2$ for all $n\geq 1$ we have $\sum_{n\geq 1}|a_n|<\infty$, which implies that uniformly for $t\in \mathbb{R}$ we have
\begin{equation}\label{Fourier}g(t) = \frac{a_0}{2}+ \sum_{n=1}^{\infty} a_n\cos(nt).
\end{equation}
 
For $t\in \mathbb{R}$, let $||t||$ be the distance from $t$ to the nearest integer. Using the Fourier series expansion \eqref{Fourier}, we shall obtain an asymptotic estimate for the sum $\sum_{h\leq H} g(2\pi\alpha h)/h$, which depends on whether $\alpha$ is close to a rational number of small denominator.
\begin{lem}\label{MinorMajor}
Let $H$ be large, and $R=\log H$. Then, for any $\alpha\in [0, 1)$ such that $r\alpha\notin \mathbb{Z}$ for all $r\leq R$, we have 
\begin{equation}\label{MinorArcs2} 
\sum_{h\leq H} \frac{g(2\pi\alpha h)}{h} = \frac{a_0}{2} \log H -\sum_{1\leq r\leq R}a_r \log |1-e(r\alpha )| +O\left(1+\frac{1}{H} \sum_{1\leq r\leq R}\frac{|a_r|}{||r\alpha||} \right),
\end{equation}
where the $a_r$ are defined by \eqref{FourierCoefficients}.
Furthermore, if $\alpha=b/\ell$ where $(b, \ell)=1$ and $\ell\leq R$ then 
\begin{equation}\label{MajorArcs2}
\sum_{h\leq H} \frac{g(2\pi\alpha h)}{h} = \left(\frac{a_0}{2}  +\sum_{1\leq m\leq R/\ell} a_{m\ell}\right) \log H- \sum_{\substack{1\leq r\leq R\\ \ell\nmid r}}a_r \log |1-e(r\alpha )| +O(1).
\end{equation}
\end{lem}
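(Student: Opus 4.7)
The plan is to substitute the Fourier series \eqref{Fourier} of $g$ directly into the sum and interchange the order of summation. Since $|a_n|\ll 1/n^{2}$, the series $\sum a_n\cos(2\pi n\alpha h)$ converges absolutely and uniformly in $h$, so the interchange is immediate and yields
$$\sum_{h\leq H}\frac{g(2\pi\alpha h)}{h}=\frac{a_0}{2}(\log H+\gamma)+\sum_{n\geq 1}a_n\sum_{h\leq H}\frac{\cos(2\pi n\alpha h)}{h}+O(H^{-1}).$$
I then truncate the outer sum at $n=R=\log H$. The trivial bound $\bigl|\sum_{h\leq H}\cos(2\pi n\alpha h)/h\bigr|\leq\log H+O(1)$, combined with $\sum_{n>R}|a_n|\ll 1/R$, shows that the tail contributes only $O(1)$.

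For the inner sum with $1\le n\le R$, I would use the standard Abel--summation estimate
$$\sum_{h\leq H}\frac{e(\beta h)}{h}=-\log(1-e(\beta))+O\Bigl(\frac{1}{H\|\beta\|}\Bigr)\qquad(\beta\notin\mathbb{Z}),$$
obtained by writing the partial sum as $-\log(1-e(\beta))-\sum_{h>H}e(\beta h)/h$ and bounding the tail via partial summation using $\bigl|\sum_{H<h\le N}e(\beta h)\bigr|\ll 1/\|\beta\|$. Taking real parts gives the cosine version with main term $-\log|1-e(\beta)|$. When $\beta\in\mathbb{Z}$ the inner sum is simply $\log H+\gamma+O(1/H)$.

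In the minor arcs case the hypothesis $r\alpha\notin\mathbb{Z}$ for $r\leq R$ lets me apply the displayed estimate with $\beta=n\alpha$ for every $n\leq R$; summing against $a_n$ produces the main term $-\sum_{n\leq R}a_n\log|1-e(n\alpha)|$ plus the error $\frac{1}{H}\sum_{n\leq R}|a_n|/\|n\alpha\|$, giving \eqref{MinorArcs2}. In the major arcs case $\alpha=b/\ell$ I split the range $n\leq R$ according to whether $\ell\mid n$. The indices $n=m\ell$ with $m\leq R/\ell$ contribute exactly $(\log H)\sum_{1\le m\le R/\ell}a_{m\ell}+O(1)$ since their inner sum equals $\log H+\gamma+O(1/H)$. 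For $\ell\nmid n$ the denominator bound $\|n\alpha\|\geq 1/\ell\geq 1/R$ makes the per-term error $O(1/(H\|n\alpha\|))\le O(R/H)$, which after summation against $\sum|a_n|$ is $o(1)$, yielding \eqref{MajorArcs2}.

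The main obstacle is pure bookkeeping: one has to verify that each tail fits inside the declared $O(1)$, namely (i) the $n>R$ tail of the Fourier expansion in both cases, (ii) the portion $m>R/\ell$ of the $\ell\mid n$ sum in the major arcs case (which is $\ll (\log H)\cdot\frac{1}{\ell R}=O(1/\ell)$ by $|a_n|\ll 1/n^2$), and (iii) the cumulative error $\frac{1}{H}\sum_{n\leq R,\,\ell\nmid n}|a_n|/\|n\alpha\|\ll R/H$ in the major arcs case. All three are straightforward consequences of $|a_n|\ll 1/n^2$ and the denominator lower bound $\|n\alpha\|\ge 1/\ell$.
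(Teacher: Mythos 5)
Your proposal is correct and follows essentially the same route as the paper: expand $g$ via its Fourier series, reduce to the truncated sum over $n\leq R=\log H$ (you interchange summations and truncate afterwards; the paper truncates the uniform Fourier approximation first, which is an immaterial reorganization), and then estimate $\sum_{h\leq H}\cos(2\pi n\alpha h)/h$ via the same Abel-summation bound, splitting the major-arc case according to $\ell\mid n$.
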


\begin{proof}
Since $a_n\ll 1/n^2$ for all $n\geq 1$, we deduce from \eqref{Fourier} that 
$$ g(t)= \frac{a_0}{2}+ \sum_{1\leq r \leq R} a_r \cos(rt)+ O\left(\frac{1}{R}\right), $$
uniformly for $t\in \mathbb{R}$. This gives
\begin{equation}\label{LogAverage}
 \sum_{h\leq H} \frac{g(2\pi\alpha h)}{h}= \frac{a_0}{2}\log H+ \sum_{1\leq r\leq R}a_r \sum_{h\leq H} \frac{\cos(2\pi r\alpha  h)}{h}+ O\left(1 \right). 
\end{equation}
Now if $r\alpha\notin \mathbb{N}$, then for any positive integer $N$ we have 
$$
\sum_{h\leq N} e(r\alpha h)= \frac{e((N+1)r\alpha)-1}{e(r\alpha)-1}\ll \frac{1}{||r\alpha||}.
$$
Hence, combining this bound with partial summation we obtain
$$
\sum_{h> H} \frac{e(r\alpha h)}{h}\ll \frac{1}{||r\alpha||H}.
$$
Thus, if $r\alpha\notin \mathbb{N}$ we deduce that 
\begin{equation}\label{BoundExp4}\sum_{h\leq H} \frac{\cos(2\pi r\alpha  h)}{h} = \re \sum_{h=1}^{\infty} \frac{e(r\alpha h)}{h} + O\left(\frac{1}{||r\alpha||H}\right)= -\log|1-e(r\alpha)|+O\left(\frac{1}{||r\alpha||H}\right).
\end{equation}
Inserting this estimate in \eqref{LogAverage} completes the proof of \eqref{MinorArcs2}. 

Now, suppose that $\alpha=b/\ell$ where $(b, \ell)=1$ and $\ell\leq R$. If $\ell\mid r$ then 
$$\sum_{h\leq H} \frac{\cos(2\pi r\alpha  h)}{h}= \log H+O(1).$$
On the other hand if $\ell\nmid r$, then $||r\alpha||\geq 1/\ell\geq 1/R$. Hence, it follows from \eqref{BoundExp4} that in this case we have
$$\sum_{h\leq H} \frac{\cos(2\pi r\alpha  h)}{h}= -\log|1-e(r\alpha)|+O\left(\frac{\log H}{H}\right).$$
The proof of \eqref{MajorArcs2} follows upon combining these estimates with \eqref{LogAverage}. 
\end{proof}
We are now ready to prove Theorem \ref{BOUNDGH}.
\begin{proof}[Proof of Theorem \ref{BOUNDGH}]
By Lemma \ref{FHg} it suffices to prove that for all $\alpha\in [0, 1)$ we have
\begin{equation}\label{BoundSumgh}
\sum_{h\leq H} \frac{g(2\pi\alpha h)}{h} \leq \frac{a_0}{2}\log H+O(1). 
\end{equation}
Let $\alpha\in [0, 1)$. By Dirichlet's approximation theorem, there exists $(b, r)=1$ with $0\leq b\leq r$ and $1\leq r\leq H$ such that
\begin{equation}\label{DirichletApprox}
\left|\alpha-\frac{b}{r}\right|\leq \frac{1}{rH}.
\end{equation}
Let $R=\log H$. We say that $\alpha$ lies in a ``major arc'' if such an approximation exists with $r\leq R$, and otherwise $\alpha$ is said to lie in a ``minor arc''.

We first prove \eqref{BoundSumgh} when $\alpha$ lies in a minor arc. In this case we have $||r\alpha||>1/H$ for all $1\leq r\leq R$. Thus, it follows from Lemma \ref{MinorMajor} that
$$  \sum_{h\leq H} \frac{g(2\pi\alpha h)}{h} = \frac{a_0}{2} \log H -\sum_{1\leq r\leq R}a_r \log |1-e(r\alpha )| +O\left(1 + \sum_{1\leq r\leq R}|a_r|\right).
$$
Moreover, since $a_r\leq 0$ and $|a_r|\ll 1/r^2$ for all $r\ge 1$,  and $\log |1-e(r\alpha)|\leq \log 2$, we deduce that 
$$ \sum_{h\leq H} \frac{g(2\pi\alpha h)}{h} = \frac{a_0}{2} \log H +\sum_{1\leq r\leq R}|a_r| \log |1-e(r\alpha )|+O(1)\leq  \frac{a_0}{2} \log H +O(1),$$
which yields the result in this case.

We now suppose that $\alpha$ lies in a major arc. In this case there exists a rational number $b/r$ such that $(b, r)=1$, $1\leq r\leq R$ and $|\alpha-b/r|\leq 1/rH$. Since $g$ is continuous and has a piecewise continuous derivative, we have $|g(2\pi\alpha h)- g(2\pi bh/r)|\ll h/H.$ Therefore, appealing to Lemma \ref{MinorMajor} we obtain
\begin{align*}
\sum_{h\leq H} \frac{g(2\pi\alpha h)}{h} 
&= \sum_{h\leq H}\frac{g\left(\frac{2\pi bh}{r}\right)}{h}+O(1)\\
&=\left(\frac{a_0}{2}  +\sum_{1\leq m\leq R/r} a_{mr}\right) \log H- \sum_{\substack{1\leq n\leq R\\ r\nmid n}}a_n \log |1-e(nb/r)| +O(1).
\end{align*}
The inequality \eqref{BoundSumgh} follows in this case upon noting that $a_r\leq 0$ and $|a_r|\ll 1/r^2$ for all $r\ge 1$, and $\log |1-e(nb/r)|\leq \log 2$. This completes the proof.
\end{proof}

\section{Moments of sums of Birch sums and ingredients from algebraic geometry}\label{Moments}

In this section we shall investigate the $2k$-th moment of sums of Birch sums
\begin{equation}\label{ExpBi}
\sum_{y\le |h|<z} c(h) \B(a-h),
\end{equation}
where $1\le y<z<p/2$ are real numbers and $\{c(h)\}_{h\in \mathbb{Z}}$ is a sequence of complex numbers. For $k$ fixed, these moments were computed by Kowalski and Sawin \cite{KoSa} using deep tools from algebraic geometry, namely Deligne's equidistribution theorem, the Goursat-Kolchin-Ribet criterion of Katz, as well as Katz's computations for the monodromy groups of a certain sheaf attached to the exponential sums $\B(a)$ (see \cite{Ka}). However, in our case we need asymptotic formulas for these moments that hold uniformly in the region $k\leq (\log p)^{1-o(1)}$. To this end, we shall use a uniform version of Lemma 2.5 of \cite{KoSa}, which we extract from the recent work of Perret-Gentil \cite{Pe1} on $\ell$-adic trace functions over finite fields.
\begin{lem}\label{Algebraic}
Let $p>7$ be prime. For all positive integers $1\leq k\leq (\log p)/2$, and all $h_1, \dots, h_k\in \F$ we have 
$$\frac{1}{p-1}\sum_{a\in \Fs}\B(a-h_1) \cdots \B(a-h_k) = \ex\big(\X(h_1)\cdots  \X(h_k)\big) + O\left(\frac{2^k k}{\sqrt{p}}\right),$$
where $\{\X(h)\}_{h\in \mathbb{Z}}$ is a sequence of independent random variables with Sato-Tate distributions on $[-2, 2]$, and the implied constant is absolute. 
\end{lem}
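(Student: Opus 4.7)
The plan is to interpret the moment as an average of Frobenius traces of a tensor product of $\ell$-adic sheaves, apply Deligne's Riemann Hypothesis, and invoke the uniform conductor control of Perret-Gentil~\cite{Pe1}. First I would group arguments by distinct values: let $g_1, \dots, g_r$ be the distinct elements among $\{h_1,\dots,h_k\}$, occurring with multiplicities $m_1, \dots, m_r$ with $\sum_j m_j = k$. Let $\mathcal{F}$ denote the $\ell$-adic sheaf on $\mathbb{A}^1_{\F}$ whose Frobenius trace at $a\in \F$ equals $\B(a)$ (the ``Birch sheaf''): by Katz~\cite{Ka} it is geometrically irreducible of rank $2$, pure of weight $0$, lisse on $\mathbb{A}^1$, with geometric monodromy group $\mathrm{SL}_2$ and Swan conductor at $\infty$ bounded independently of $p$. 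Form the shifted tensor product
$$ \mathcal{G}:=\bigotimes_{j=1}^{r}[+g_j]^{*}\mathcal{F}^{\otimes m_j},$$
so that its trace function at $a$ is exactly $\prod_{i=1}^{k}\B(a-h_i)$.

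Next, the Grothendieck-Lefschetz trace formula together with Deligne's Riemann Hypothesis yields
$$ \frac{1}{p}\sum_{a\in\F}\prod_{i=1}^{k}\B(a-h_i) = \dim \mathcal{G}^{\pi_1^{\mathrm{geom}}} + O\!\left(\frac{\mathrm{cond}(\mathcal{G})}{\sqrt{p}}\right),$$
where $\mathrm{cond}(\mathcal{G})$ aggregates the rank, number of singularities, and Swan conductors of $\mathcal{G}$. To identify the main term with $\ex(\X(h_1)\cdots\X(h_k))$, I would invoke the Goursat-Kolchin-Ribet criterion in the form used by Kowalski-Sawin~\cite{KoSa}: the shifted sheaves $[+g_j]^{*}\mathcal{F}$ are pairwise geometrically non-isomorphic for distinct $g_j$, which forces the joint geometric monodromy of $([+g_1]^{*}\mathcal{F},\dots,[+g_r]^{*}\mathcal{F})$ to be the full product $\mathrm{SL}_2^r$. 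Since the dimension of $\mathrm{SL}_2$-invariants in $\mathrm{Std}^{\otimes m}$ equals the Sato-Tate moment $\ex(\X^m)$, the main term evaluates to $\prod_j \ex(\X^{m_j}) = \ex(\X(h_1)\cdots\X(h_k))$ by the independence of the $\X(h)$.

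For the error, Perret-Gentil's uniform analysis of coherent families~\cite{Pe1} supplies the bound $\mathrm{cond}(\mathcal{G}) \ll 2^k \cdot k$: the factor $2^k$ is the rank of the tensor product, while the factor $k$ dominates both the number of singular points (contained in $\{\infty\}\cup\{g_1,\dots,g_r\}$) and the total Swan conductor, the latter because the local invariants of $[+g_j]^{*}\mathcal{F}^{\otimes m_j}$ grow at most linearly in $m_j$ uniformly in $p$. Replacing $\sum_{a\in \F}$ by $\sum_{a\in\Fs}$ and $1/p$ by $1/(p-1)$ introduces only lower-order adjustments which are absorbed into the error term, and the hypothesis $k\le (\log p)/2$ ensures $2^k\le \sqrt{p}$, so that the bound is non-trivial.

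The main obstacle is securing the linear-in-$k$ dependence of $\mathrm{cond}(\mathcal{G})$: a naive application of the standard additive/multiplicative behaviour of Swan conductors under tensor product yields losses of the form $C^k$ with $C>2$, which would destroy the useful range of $k$. The refined local bookkeeping carried out by Perret-Gentil for Birch-type sheaves, exploiting the explicit structure of $\mathcal{F}$ at $\infty$, is exactly what reduces the base to the rank $2$ and keeps the polynomial factor linear, thereby making Lemma~\ref{Algebraic} effective throughout the range $k\le(\log p)/2$.
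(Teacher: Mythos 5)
Your proof is essentially the paper's, unpacked: the paper simply cites Proposition 2.4 of Perret-Gentil \cite{Pe1}, which already bundles the Grothendieck--Lefschetz trace formula, Deligne's Riemann Hypothesis, the Goursat--Kolchin--Ribet criterion, and the uniform conductor bound $\ll 2^{k}k$ — exactly the chain you re-derive. The inputs (Katz's $\mathrm{SL}_2$ monodromy for $p>7$, the absence of geometric isomorphisms $[+\tau]^*\mathcal{S}\simeq\mathcal{S}\otimes\mathcal{L}$ from Kowalski--Sawin, and the linear-in-$k$ conductor control from Perret-Gentil) are identical.
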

\begin{proof}
First, we write 
$$\sum_{a\in \Fs}\B(a-h_1) \cdots \B(a-h_k) = \sum_{a\in \Fs}\B(a-j_1)^{b_1} \cdots \B(a-j_{m}) ^{b_{m}},$$
where $j_1, \dots, j_{m}$ are distinct, and $b_1+\cdots + b_{m}=k$.

Let $\mathcal{S}$ be the rank $2$ lisse sheaf on $\mathbb{A}^1_{\F}$ parameterizing the Birch sums $\B(a)$ (see Katz \cite{Ka} for a reference on these sheaves and their monodromy groups). Katz (see Th. 19 and Cor. 20 of \cite{Ka}) showed that the geometric and arithmetic monodromy groups of the sheaf $\mathcal{S}$ are both equal to $\textup{SL}_2$ for $p > 7$. Furthermore, it follows from the discussion in the beginning of page 15 of \cite{KoSa} that for $\tau\neq 0$, there is no geometric isomorphism
$$ [+\tau]^*\mathcal{S} \simeq \mathcal{S} \otimes \mathcal{L}, $$
where $\mathcal{L}$ is a rank $1$ sheaf over $\F$. Thus, we can apply Proposition 2.4 of \cite{Pe1} which gives
\begin{equation}\label{PER}
\frac{1}{p-1} \sum_{a\in \Fs}\B(a-j_1)^{b_1} \cdots \B(a-j_{m}) ^{b_{m}} = \prod_{i=1}^m \textup{mult}_1 (\textup{Std}^{\otimes b_i})+O\left(\frac{2^{k} k}{\sqrt{p}}\right),
\end{equation}
where $\textup{mult}_1(\textup{Std}^{\otimes b})$ is the multiplicity of the trivial representation of $\textup{SU}_2$ in the $b$-th tensor power of its standard $2$-dimensional representation. Finally, it follows from the representation theory of $\textup{SU}_2$ that 
$$  \textup{mult}_1 (\textup{Std}^{\otimes b})= \ex(\Y^b),$$
for any random-variable $\Y$ with Sato-Tate distribution $\mu_{ST}$. Thus, we deduce from \eqref{PER} that 
$$ \frac{1}{p-1} \sum_{a\in \Fs}\B(a-j_1)^{b_1} \cdots \B(a-j_{m}) ^{b_{m}}= \ex\left(\X(j_1)^{b_1}\cdots  \X(j_m)^{b_m}\right) +O\left(\frac{2^{k} k}{\sqrt{p}}\right), 
$$
where $\X(j_1), \dots, \X(j_m)$ are independent random variables with Sato-Tate distributions on $[-2, 2]$. This completes the proof.
\end{proof}
Using this result we prove the following proposition.
\begin{pro}\label{AsympMomentsyz}
Let $\{c(h)\}_{h\in \mathbb{Z}}$ be a sequence of complex numbers, and $p$ be a large prime. Let $0\leq y<z\leq p/2$ be real numbers and $k, \ell \leq (\log p)/4$ be positive integers. Then, we have  
\begin{align*}
&\frac{1}{p-1}\sum_{a\in \Fs}\left(\sum_{y\le |h|<z} c(h) \B(a-h)\right)^{k}\left(\sum_{y\le |h|<z} \overline{c(h)} \B(a-h)\right)^{\ell}\\
 &= \ex\left(\left(\sum_{y\le |h|<z} c(h) \X(h)\right)^{k}\left(\sum_{y\le |h|<z} \overline{c(h)} \X(h)\right)^{\ell}\right) +O\left(p^{-1/2}\Big(4\sum_{y\le |h|<z} |c(h)|\Big)^{k+\ell} \right),
\end{align*}
where $\{\X(h)\}_{|h|\geq 1} $ is a sequence of independent random variables with Sato-Tate distributions on $[-2, 2]$. 
\end{pro}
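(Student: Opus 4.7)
The plan is to expand both powers into multi-sums, apply Lemma \ref{Algebraic} to each resulting product of Birch sums, and then repackage the main term as an expectation. All the work has essentially been pushed into Lemma \ref{Algebraic}; what remains is a clean bookkeeping exercise, so I do not expect a serious obstacle — the only point requiring care is verifying that the length of the product of Birch sums stays within the admissible range of Lemma \ref{Algebraic}.

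First I would write
$$
\Bigl(\sum_{y\le|h|<z} c(h)\B(a-h)\Bigr)^{k}\Bigl(\sum_{y\le|h|<z}\overline{c(h)}\B(a-h)\Bigr)^{\ell}=\sum_{\mathbf{h},\mathbf{h}'} c(\mathbf{h})\,\overline{c(\mathbf{h}')}\,\prod_{i=1}^{k}\B(a-h_i)\prod_{j=1}^{\ell}\B(a-h'_j),
$$
where $\mathbf{h}=(h_1,\dots,h_k)$, $\mathbf{h}'=(h'_1,\dots,h'_\ell)$ run over tuples of integers with $y\le|h_i|,|h'_j|<z$, and $c(\mathbf{h}):=\prod_i c(h_i)$, $\overline{c(\mathbf{h}')}:=\prod_j\overline{c(h'_j)}$. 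Here I use that $\B(a)$ is real (the change of variables $n\mapsto -n$ in \eqref{BirchSum} shows $\B(a)=\overline{\B(a)}$), so the two factors are honest complex conjugates.

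Next, for each fixed $(\mathbf{h},\mathbf{h}')$ the inner product is an ordered product of $k+\ell$ Birch sums indexed by shifts in $\F$, and since $k+\ell\le(\log p)/2$ Lemma \ref{Algebraic} applies. It yields
$$
\frac{1}{p-1}\sum_{a\in\Fs}\prod_{i=1}^{k}\B(a-h_i)\prod_{j=1}^{\ell}\B(a-h'_j)=\ex\!\Bigl(\prod_{i=1}^{k}\X(h_i)\prod_{j=1}^{\ell}\X(h'_j)\Bigr)+O\!\left(\frac{2^{k+\ell}(k+\ell)}{\sqrt{p}}\right).
$$
I multiply both sides by $c(\mathbf{h})\,\overline{c(\mathbf{h}')}$, sum over $(\mathbf{h},\mathbf{h}')$, and pull the sum (which is finite) inside the expectation by linearity. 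The main term becomes exactly
$$
\ex\!\Biggl(\Bigl(\sum_{y\le|h|<z}c(h)\X(h)\Bigr)^{\!k}\Bigl(\sum_{y\le|h|<z}\overline{c(h)}\X(h)\Bigr)^{\!\ell}\Biggr).
$$

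Finally I collect the error. The total contribution is bounded by
$$
\frac{2^{k+\ell}(k+\ell)}{\sqrt{p}}\sum_{\mathbf{h},\mathbf{h}'}|c(\mathbf{h})|\,|c(\mathbf{h}')|=\frac{2^{k+\ell}(k+\ell)}{\sqrt{p}}\Bigl(\sum_{y\le|h|<z}|c(h)|\Bigr)^{\!k+\ell},
$$
and since $k+\ell\le 2^{k+\ell}$ for $k+\ell\ge 1$ we may absorb the factor $(k+\ell)$ to get the stated $p^{-1/2}\bigl(4\sum|c(h)|\bigr)^{k+\ell}$. This completes the proof; the only place where a difficulty could have arisen was the use of Lemma \ref{Algebraic}, which is why the uniform version extracted from Perret-Gentil's work is the essential input.
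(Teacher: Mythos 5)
Your proof is correct and follows essentially the same route as the paper: expand both powers into multi-indexed sums, apply Lemma \ref{Algebraic} to each product of $k+\ell\le(\log p)/2$ Birch sums, repackage the main term as an expectation by linearity, and bound the accumulated error by $\frac{2^{k+\ell}(k+\ell)}{\sqrt p}\bigl(\sum|c(h)|\bigr)^{k+\ell}\le p^{-1/2}\bigl(4\sum|c(h)|\bigr)^{k+\ell}$. The side remark that $\B$ is real-valued is a nice clarification but is not actually needed, since the conjugate in the statement falls only on $c(h)$.
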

\begin{proof} 
It follows from Lemma \ref{Algebraic} that
\begin{align*}
&\frac{1}{p-1}\sum_{a\in \Fs}\left(\sum_{y\le |h|<z} c(h) \B(a-h)\right)^{k}\left(\sum_{y\le |h|<z} \overline{c(h)} \B(a-h)\right)^{\ell}\\
&= \sum_{\substack{y\le |h_1|, \dots, |h_k|<z\\ y\le |r_1|, \dots, |r_{\ell}|<z}} c(h_1)\cdots c(h_k) \overline{c(r_1)\cdots c(r_{\ell})} \ \frac{1}{p-1}\sum_{a\in \Fs}\prod_{u=1}^k\B(a-h_u)\prod_{v=1}^{\ell}\B(a-r_v)\\
&=\sum_{\substack{y\le |h_1|, \dots, |h_k|<z\\ y\le |r_1|, \dots, |r_{\ell}|<z}} c(h_1)\cdots c(h_k) \overline{c(r_1)\cdots c(r_{\ell})} \ \ex\left(\prod_{u=1}^k \X(h_{u})\prod_{v=1}^{\ell}\X(r_v)\right) + E_{k, \ell}(y, z),
\end{align*}
where the error term satisfies
$$ E_{k, \ell}(y, z) \ll \frac{2^{k+\ell}(k+\ell)}{\sqrt{p}} \Big(\sum_{y\leq |h|<z} |c(h)|\Big)^{k+\ell} \ll p^{-1/2}\Big(4\sum_{y\le |h|<z} |c(h)|\Big)^{k+\ell}. $$
The result follows upon noting that
\begin{equation}\label{Random3}
\begin{aligned}
&\sum_{\substack{y\le |h_1|, \dots, |h_k|<z\\ y\le |r_1|, \dots, |r_{\ell}|<z}} c(h_1)\cdots c(h_k) \overline{c(r_1)\cdots c(r_{\ell})} \ \ex\left(\prod_{u=1}^k \X(h_{u})\prod_{v=1}^{\ell}\X(r_v)\right)\\
&=\ex\left(\left(\sum_{y\le |h|<z} c(h) \X(h)\right)^{k}\left(\sum_{y\le |h|<z} \overline{c(h)} \X(h)\right)^{\ell}\right).
\end{aligned}
\end{equation}
\end{proof}
Next, we prove uniform bounds for the moments of the sum of random variables $\sum_{y\le |h|<z} c(h) \X(h)$, where $c(h)$ are complex numbers that satisfy $c(h)\ll1/|h|$ for $|h|\geq 1$. These bounds will be used in the proofs of the lower and upper bounds of Theorem \ref{Main}.
\begin{lem}\label{BOUNDRAND}
Let $\{c(h)\}_{h\in \mathbb{Z}}$ be a sequence of complex numbers such that $|c(h)|\leq c_0/|h|$ for $|h|\geq 1$, where $c_0$ is a positive constant. Let $1\leq y<z$ be real numbers. Then, for all positive integers $k$ we have 
\begin{equation}\label{LargeY}
\ex\left(\left|\sum_{y\le |h|<z} c(h) \X(h)\right|^k\right) \leq \left(\frac{8c_0^2k}{y}\right)^{k/2}.
\end{equation}
Moreover, if $k>y$ then 
\begin{equation}\label{SmallY}
\ex\left(\left|\sum_{y\le |h|<z} c(h) \X(h)\right|^k\right) \leq (15c_0\log k)^k.
\end{equation}
\end{lem}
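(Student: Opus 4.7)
The plan for the first bound is to exploit the sub-Gaussianity of the Sato--Tate law. The $2m$-th moment of $\X$ equals the Catalan number $C_m = \binom{2m}{m}/(m+1)$, while the odd moments vanish, and the ratio $C_m/(2m-1)!! = 2^m/(m+1)!\le 1$ (with $(2m-1)!!=(2m)!/(2^m m!)$) yields
\[
\ex\bigl(e^{\lambda \X}\bigr) = \sum_{m\ge 0} C_m \frac{\lambda^{2m}}{(2m)!} \le \sum_{m\ge 0}\frac{\lambda^{2m}}{2^m m!} = e^{\lambda^2/2},
\]
so each $\X(h)$ is sub-Gaussian with variance proxy $1$. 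Coupled with the elementary estimate $\sum_{y\le|h|<z}|c(h)|^2 \le c_0^2\sum_{|h|\ge y}h^{-2}\le 4c_0^2/y$, this supplies the variance budget for the whole argument.

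For each $\theta\in[0,2\pi)$ I would introduce the real random variable $T_\theta:=\re(e^{-i\theta}S) = \sum_{y\le|h|<z}(\cos\theta\,\re c(h)+\sin\theta\,\im c(h))\X(h)$. Since $|\cos\theta\,\re c(h)+\sin\theta\,\im c(h)|\le |c(h)|$, independence and sub-Gaussianity give $\ex(e^{\lambda T_\theta})\le \exp(2c_0^2\lambda^2/y)$, so $T_\theta$ is sub-Gaussian with proxy $\le 4c_0^2/y$. Integrating the resulting tail bound yields $\ex|T_\theta|^k\le 2\,\Gamma(k/2+1)(8c_0^2/y)^{k/2}$. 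For even $k=2m\ge 4$, I then invoke the averaging identity $|z|^{2m}=\tfrac{4^m}{\binom{2m}{m}}\cdot\tfrac{1}{2\pi}\int_0^{2\pi}|\re(e^{-i\theta}z)|^{2m}\,d\theta$ (a consequence of $\tfrac{1}{2\pi}\int_0^{2\pi}\cos^{2m}\theta\,d\theta=\binom{2m}{m}/4^m$) at $z=S$ to obtain
\[
\ex|S|^{2m} \le \frac{2\cdot 4^m\, m!}{\binom{2m}{m}}\Bigl(\frac{8c_0^2}{y}\Bigr)^m;
\]
a routine Stirling estimate shows that the prefactor grows only like $\sqrt{m}\cdot(4m/e)^m$, so this reduces to $\ex|S|^{2m}\le (16c_0^2 m/y)^m = (8c_0^2 k/y)^{k/2}$. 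The case $k=2$ bypasses the (wasteful) rotation step via the direct identity $\ex|S|^2=\sum|c(h)|^2\le 4c_0^2/y\le 16c_0^2/y$, and odd $k$ follow from Cauchy--Schwarz $\ex|S|^k\le \sqrt{\ex|S|^{k-1}\,\ex|S|^{k+1}}$ together with the even-moment bounds just established.

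For the second bound, assuming $k>y$, I would split $S=A+B$ with $A:=\sum_{y\le|h|<\min(k,z)}c(h)\X(h)$ and $B:=\sum_{\min(k,z)\le|h|<z}c(h)\X(h)$ (where $B$ is empty when $k\ge z$). The deterministic bound $|A|\le 2\sum_{y\le|h|<k}|c(h)|\le 4c_0\log k$ comes from $|\X|\le 2$ and the harmonic sum. The tail $B$ has the same form as $S$ but with lower cutoff $\min(k,z)\ge k$, so applying the first bound with $y$ replaced by $k$ yields $(\ex|B|^k)^{1/k}\le (8c_0^2)^{1/2}=2\sqrt{2}\,c_0$. Minkowski's inequality in $L^k$ then gives $(\ex|S|^k)^{1/k} \le 4c_0\log k+2\sqrt{2}\,c_0 \le 15 c_0\log k$ for $k\ge 2$ (automatic since $k>y\ge 1$), and raising to the $k$-th power gives the claimed bound. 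The main obstacle is simply constant tracking: both the sub-Gaussian moment bound and the rotation identity are slack by $O(1)$ factors, so achieving the precise constant $8$ forces the separate treatment of $k=2$ and of odd $k$; the cushion becomes exponential for moderately large $k$, but the very smallest cases lie within a narrow margin.
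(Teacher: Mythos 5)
Your route to the even moments is genuinely different from the paper's. The paper expands $|S|^{2m}$ by the multinomial theorem and bounds it directly via the combinatorial inequality $\binom{2m}{2r_1,\dots,2r_\ell}\binom{2r_1}{r_1}\cdots\binom{2r_\ell}{r_\ell}\le\frac{(2m)!}{m!}\binom{m}{r_1,\dots,r_\ell}$, reaching $\ex|S|^{2m}\le\frac{(2m)!}{m!}\bigl(\sum_h 1/h^2\bigr)^m$. You instead exploit sub-Gaussianity ($\ex e^{\lambda\X}\le e^{\lambda^2/2}$, which you verify correctly from the Catalan moments) and a rotation-averaging identity to pass from the real projections $T_\theta$ to $|S|$. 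Both routes are valid for even $k$, yours requiring the separate $k=2$ treatment (and minor Stirling bookkeeping — your stated prefactor $\sqrt m(4m/e)^m$ has a spurious $4^m$ that should cancel, but since you overcounted rather than undercounted, the conclusion still holds for $m\ge 2$).

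The odd-$k$ step, however, has a genuine gap. You interpolate symmetrically via $\ex|S|^k\le\sqrt{\ex|S|^{k-1}\,\ex|S|^{k+1}}$. Plugging in the lemma's target bound $\ex|S|^{2m}\le(8c_0^2\cdot 2m/y)^m$ produces $\ex|S|^k\le(8c_0^2/y)^{k/2}(k-1)^{(k-1)/4}(k+1)^{(k+1)/4}$, and since $x\mapsto x\log x$ is convex, $(k-1)^{(k-1)/4}(k+1)^{(k+1)/4}>k^{k/2}$ for every odd $k\ge 3$. So the symmetric interpolation with the stated bounds never quite closes. Your $k=3$ case happens to survive because you quietly use the sharper $\ex|S|^2\le 4c_0^2/y$ on one side, but already at $k=5$ the numbers fail: $\sqrt{(32c_0^2/y)^2(48c_0^2/y)^3}\approx 10641(c_0^2/y)^{5/2}$, whereas the target is $40^{5/2}(c_0^2/y)^{5/2}\approx 10119(c_0^2/y)^{5/2}$. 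The paper avoids this by using the asymmetric Cauchy--Schwarz $\ex|S|^k\le(\ex|S|^{2k-2})^{1/2}(\ex|S|^2)^{1/2}$, which always pairs the large even moment with the small second moment and needs only the trivial $(k-1)^{k-1}\le k^k$. Your argument can be repaired either by adopting that asymmetric split, or by carrying the unrelaxed rotation constant $\frac{2\cdot 4^m m!}{\binom{2m}{m}}(8c_0^2/y)^m$ through the interpolation (those constants do have the needed slack), but as written the odd case is not established. The second bound (Minkowski split at $|h|=k$) is essentially identical to the paper's and is fine.
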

Combining this result with Proposition \ref{AsympMomentsyz} we deduce uniform bounds for the moments of the sums \eqref{ExpBi}.
\begin{cor}\label{BoundMomentsyz}
Let $\{c(h)\}_{h\in \mathbb{Z}}$ be a sequence of complex numbers such that $|c(h)|\leq c_0/|h|$ for $|h|\geq 1$, where $c_0$ is a positive constant. Let $p$ be a large prime and $1\leq y <z\leq p/2$ be real numbers. Then, for all positive integers  $ k\leq (\log p)/(5\log\log p)$ we have   
$$
\frac{1}{p-1}\sum_{a\in \Fs}\left|\sum_{y\le |h|<z} c(h) \B(a-h)\right|^{2k} \ll \left(\frac{16c_0^2k}{y}\right)^{k}+\frac{(16c_0\log p)^{2k}}{p^{1/2}}.
$$
\end{cor}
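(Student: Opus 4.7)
The plan is to apply Proposition \ref{AsympMomentsyz} with $\ell=k$ to reduce the arithmetic moment to a probabilistic moment plus an error term, and then control the main (probabilistic) term via the first bound of Lemma \ref{BOUNDRAND}. First I would check that the hypothesis of Proposition \ref{AsympMomentsyz} is met: since $k\leq (\log p)/(5\log\log p)$ we have $2k\leq (\log p)/4$ for all sufficiently large $p$, so the proposition applies with both exponents equal to $k$. This gives
$$ \frac{1}{p-1}\sum_{a\in\Fs}\Big|\sum_{y\le|h|<z}c(h)\B(a-h)\Big|^{2k}=\ex\!\left(\Big|\sum_{y\le|h|<z}c(h)\X(h)\Big|^{2k}\right)+O\!\left(p^{-1/2}\Big(4\!\sum_{y\le|h|<z}|c(h)|\Big)^{2k}\right). $$

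Next, I would apply Lemma \ref{BOUNDRAND} with $2k$ in place of $k$. Since the first inequality of the lemma holds for every positive integer, it yields
$$ \ex\!\left(\Big|\sum_{y\le|h|<z}c(h)\X(h)\Big|^{2k}\right)\le\left(\frac{8c_0^2\cdot 2k}{y}\right)^{k}=\left(\frac{16c_0^2 k}{y}\right)^{k}, $$
which is exactly the first term in the target bound. For the error term, the hypothesis $|c(h)|\le c_0/|h|$ together with a harmonic sum estimate gives
$$ \sum_{y\le|h|<z}|c(h)|\le 2c_0\sum_{1\le h<p/2}\frac{1}{h}\le 4c_0\log p, $$
so $4\sum_{y\le|h|<z}|c(h)|\le 16c_0\log p$ and the error term is $O\!\left(p^{-1/2}(16c_0\log p)^{2k}\right)$.

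Combining the two estimates yields the claimed bound. There is no real obstacle here; the argument is a direct concatenation of the two preceding results, and the only points to verify are the admissibility of the parameter range (so that Proposition \ref{AsympMomentsyz} can be invoked with $k=\ell$) and the numerical matching of the constants, both of which are immediate.
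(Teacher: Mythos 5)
Your proof is correct and follows exactly the paper's route: invoke Proposition \ref{AsympMomentsyz} with $\ell=k$ (valid since $k\leq(\log p)/(5\log\log p)\leq(\log p)/4$), bound the resulting random moment via the first inequality of Lemma \ref{BOUNDRAND} with exponent $2k$, and absorb the error term using $\sum_{y\le|h|<z}|c(h)|\leq 4c_0\log p$. No differences from the paper's argument.
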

\begin{proof} It follows from Proposition \ref{AsympMomentsyz} that
\begin{align*}
\frac{1}{p-1}\sum_{a\in \Fs}\left|\sum_{y\le |h|<z} c(h) \B(a-h)\right|^{2k} 
&=\ex\left(\left|\sum_{y\le |h|<z} c(h) \X(h)\right|^{2k}\right)\\
& \ \ \  +O\left(\frac{(16c_0\log p)^{2k}}{p^{1/2}}\right), 
\end{align*}
since $\sum_{|h| < z} |c(h)|\leq 4c_0 \log p$. Using \eqref{LargeY} completes the proof.
\end{proof}

\begin{proof}[Proof of Lemma \ref{BOUNDRAND}]
We first prove \eqref{LargeY} when $k=2m$ is even. 
By \eqref{Random3} we have
\begin{equation}\label{b1}
\ex\left(\left|\sum_{y\le |h|<z} c(h) \X(h)\right|^{2m}\right)\leq 
c_0^{2m}\sum_{y\le |h_1|, \dots, |h_{2m}|<z} \frac{\left|\ex\left(\X(h_1) \cdots \X(h_{2m})\right)\right|}{|h_{1}\cdots h_{2m}|}. 
\end{equation}
Recall that if $\X$ is a random variable with Sato-Tate distribution $\mu_{ST}$ and $\ell$ is a positive integer then 
$$ \ex(\X^{\ell})= \begin{cases}  \frac{1}{n+1} \binom{2n}{n}& \text{ if } \ell=2n \text{ is even}, \\ 0 &  \text{ if } \ell \text{ is odd}. \end{cases}
$$
Hence, we obtain
\begin{equation}\label{b2}
\begin{aligned}
&\sum_{y\le |h_1|, \dots, |h_{2m}|<z} \frac{\left|\ex\left(\X(h_1) \cdots \X(h_{2m})\right)\right|}{|h_{1}\cdots h_{2m}|}\\
 &= \sum_{\substack{h_1<\cdots <h_{\ell}\\ y\le |h_1|, \dots, |h_{\ell}|<z}}\sum_{\substack{n_1, \dots, n_{\ell}\geq 1\\ n_1+\cdots +n_{\ell}=2m}} \binom{2m}{n_1, \dots, n_{\ell}} \frac{|\ex(\X(h_1)^{n_1})| \cdots |\ex(\X(h_{\ell})^{n_{\ell}})|}{|h_{1}^{n_1}\cdots h_{\ell}^{n_{\ell}}|}\\
&\leq  \sum_{\substack{h_1<\cdots <h_{\ell}\\ y\le |h_1|, \dots, |h_{\ell}|<z}}\sum_{\substack{r_1, \dots, r_{\ell}\geq 1\\ r_1+\cdots +r_{\ell}=m}} \binom{2m}{2r_1, \dots, 2r_{\ell}} \frac{\binom{2r_1}{r_1}\cdots\binom{2r_{\ell}}{r_{\ell}}}{h_{1}^{2r_1}\cdots h_{\ell}^{2r_{\ell}}}\\
&\leq \frac{(2m)!}{m!} \left(\sum_{y\leq |h|< z} \frac{1}{h^2}\right)^m,
\end{aligned}
\end{equation}
since 
$$\binom{2m}{2r_1, \dots, 2r_{\ell}} \binom{2r_1}{r_1}\cdots\binom{2r_{\ell}}{r_{\ell}} \leq \frac{(2m)!}{m!} \binom{m}{r_1, \dots, r_{\ell}}.$$
Thus, combining the estimates \eqref{b1} and \eqref{b2}, together with the elementary inequalities $(2m)!/m!\leq (2m)^m$ and 
$\sum_{y\leq |h|< z} 1/h^2\leq 4/y$  we obtain 
\begin{equation}\label{EVENBound}
\ex\left(\left|\sum_{y\le |h|<z} c(h) \X(h)\right|^{2m}\right) \leq \left(\frac{8c_0^2m}{y}\right)^{m}.
\end{equation}
We now establish \eqref{LargeY} when $k$ is odd. By the Cauchy-Schwarz inequality and \eqref{EVENBound} we have 
\begin{align*} 
\ex\left(\left|\sum_{y\le |h|<z} c(h) \X(h)\right|^{k}\right)
& \leq \ex\left(\left|\sum_{y\le |h|<z} c(h) \X(h)\right|^{2k-2}\right)^{1/2}\ex\left(\left|\sum_{y\le |h|<z} c(h) \X(h)\right|^{2}\right)^{1/2}\\
& \leq \left(\frac{8c_0^2k}{y}\right)^{k/2}, 
\end{align*}
as desired. 

We now prove \eqref{SmallY}. By \eqref{LargeY} and Minkowski's inequality we have 
\begin{align*}
\ex\left(\left|\sum_{y\le |h|<z} c(h) \X(h)\right|^k\right)^{1/k}
&\leq \ex\left(\left|\sum_{y\le |h| < k} c(h) \X(h)\right|^k\right)^{1/k} +\ex\left(\left|\sum_{k\le |h|<z} c(h) \X(h)\right|^k\right)^{1/k}\\
&\leq 2c_0 \sum_{y\le |h|<k} \frac{1}{|h|}+ \sqrt{8} c_0.\\
&\leq 15c_0\log k .\\
\end{align*}
This completes the proof.

\end{proof}

\section{Completing the proof of the upper bound in Theorem \ref{Main}: Proof of Theorem \ref{MomentsMaxTail}}
Let $p$ be a large prime, and $k$ be a large positive integer such that $k\leq (\log p)/(100\log\log p)$. Let $y\leq k^2$ be a positive real number. Then, it follows from
Minkowski's inequality that
\begin{align*}
&\left(\sum_{a\in \Fs} \max_{\alpha\in S}\left|\sum_{y\le |h|<p/2} \frac{e\left(\alpha h\right)- 1}{h} \B(a-h)\right|^{2k}\right)^{1/2k}\\
 \leq 
 &  \ \left(\sum_{a\in \Fs} \max_{\alpha\in S}\left|\sum_{y\le |h|<k^2} \frac{e\left(\alpha h\right)- 1}{h} \B(a-h)\right|^{2k}\right)^{1/2k}\\ 
& \ \ \ \ \ \ \ \ \ \ \ \ \ \ \ \ \ \ \ \ \ \ \ \ \ \ \ \ \ \ +\left(\sum_{a\in \Fs} \max_{\alpha\in S}\left|\sum_{k^2\le |h|<p/2} \frac{e\left(\alpha h\right)- 1}{h} \B(a-h)\right|^{2k}\right)^{1/2k}.
\end{align*}
Therefore, Theorem \ref{MomentsMaxTail} is an immediate consequence of the following propositions.
\begin{pro}\label{MomentsMaxTail1}
Let $p$ be a large prime, and $k$ be a large positive integer such that $k\leq (\log p)/(100\log\log p)$. Let $S$ be a non-empty subset of $[0, 1)$, and put $y=10^5 k$.  Then we have 
$$ 
\frac{1}{p-1}\sum_{a\in \Fs} \max_{\alpha\in S}\left|\sum_{y\le |h|<k^2} \frac{e\left(\alpha h\right)- 1}{h} \B(a-h)\right|^{2k} \ll e^{-4k}.
$$
\end{pro}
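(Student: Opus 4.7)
The plan is to reduce the continuous maximum over $S$ to a maximum over a finite net, apply Corollary \ref{BoundMomentsyz} pointwise, and close via a trivial union bound. The generous cutoff $y = 10^5 k$ is precisely what makes the right-hand side of Corollary \ref{BoundMomentsyz} small enough to absorb the polynomial loss incurred by this union bound.

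First I would set up the discretization. Write $T_a(\alpha) := \sum_{y \le |h| < k^2} \frac{e(\alpha h) - 1}{h} \B(a - h)$; since this is a trigonometric polynomial supported on frequencies $|h| < k^2$, the Weil bound $|\B(\cdot)| \le 2$ yields $|T_a'(\alpha)| = 2\pi\bigl|\sum_{y \le |h| < k^2} e(\alpha h)\B(a - h)\bigr| \le 8\pi k^2$. Taking a uniform mesh $\mathcal N \subset [0,1)$ of spacing $1/k^3$, so $|\mathcal N| \le k^3 + 1$, every $\alpha \in S$ is within $1/k^3$ of some $\alpha' \in \mathcal N$, giving
\[
\max_{\alpha \in S}|T_a(\alpha)| \le \max_{\alpha' \in \mathcal N}|T_a(\alpha')| + \frac{8\pi}{k}.
\]
Using $(u+v)^{2k} \le 4^k(u^{2k} + v^{2k})$, the Lipschitz error contributes $4^k(8\pi/k)^{2k} = (O(1)/k^2)^k$ to the final moment, which is $o(e^{-4k})$ once $k$ is large.

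Next, for each fixed $\alpha' \in \mathcal N$ I would apply Corollary \ref{BoundMomentsyz} with $c(h) = (e(\alpha' h) - 1)/h$, so that $|c(h)| \le 2/|h|$ and $c_0 = 2$. The conclusion of that corollary becomes
\[
\frac{1}{p-1}\sum_{a \in \Fs}|T_a(\alpha')|^{2k} \ll \Bigl(\frac{64\,k}{y}\Bigr)^{k} + \frac{(32\log p)^{2k}}{\sqrt p} = \Bigl(\frac{64}{10^5}\Bigr)^{k} + \frac{(32\log p)^{2k}}{\sqrt p}.
\]
A union bound over $\mathcal N$ multiplies these by $\le k^3 + 1$, and combined with the $4^k$ loss from the Lipschitz step, the first contribution is $k^3(256/10^5)^k$: this beats $e^{-4k}$ geometrically, because $256/10^5 \approx 2.56\cdot 10^{-3}$ is well below $e^{-4} \approx 1.83\cdot 10^{-2}$, so the ratio grows like $(7.16)^k$ and dominates $k^3$ once $k$ is large. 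The second contribution is $k^3 \cdot 4^k (32\log p)^{2k}/\sqrt p$; under the standing hypothesis $k \le (\log p)/(100\log\log p)$ one has $(32\log p)^{2k} \le p^{1/25}$ and $4^k k^3 \le p^{o(1)}$, so the whole quantity is $\le p^{-1/2 + 1/25 + o(1)}$, comfortably smaller than $e^{-4k} \ge p^{-1/(25\log\log p)}$.

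There is no genuine obstacle here; the proof is essentially bookkeeping once Corollary \ref{BoundMomentsyz} is in hand. The only strategic choices are the mesh scale $1/k^3$, matched to the Lipschitz constant $O(k^2)$ of $T_a$, and the threshold $y = 10^5 k$, chosen so that $(64k/y)^k$ is small enough by a constant factor to beat $e^{-4k}$ after swallowing a polynomial union-bound loss. The strength of the bound $e^{-4k}$ here (rather than merely $e^{-2k}$) is precisely what is needed so that, when one combines this estimate with the complementary range $k^2 \le |h| < p/2$ via Minkowski's inequality, one still recovers an $e^{-2k}$-type bound for the full sum in Theorem \ref{MomentsMaxTail}.
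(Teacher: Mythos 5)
Your proof is correct and follows essentially the same strategy as the paper: discretize the maximum over $S$ to a finite net with mesh polynomial in $k$, control the discretization error via a Lipschitz bound on $T_a$ (the paper uses $e(\alpha h) = e(\beta_\alpha h) + O(h/k^4)$ rather than a derivative bound, which is the same estimate), and then close with a union bound and Corollary~\ref{BoundMomentsyz}. The only cosmetic difference is that the paper splits into cases $|S|\le k^4$ and $|S| > k^4$ (applying the union bound directly over $S$ when $S$ is small, and passing to the net $\{b/k^4\}$ otherwise), whereas you always pass to the net; both choices yield the same geometric cushion $(256/10^5)^k$ against $e^{-4k}$.
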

\begin{pro}\label{MomentsMaxTail2}
Let $p$ be a large prime, and $k$ be a large positive integer such that $k\leq (\log p)/(100\log\log p)$. Let $S$ be a non-empty subset of $[0, 1)$ such that $|S|\leq \sqrt{p}$. Then we have $$ 
\frac{1}{p-1}\sum_{a\in \Fs} \max_{\alpha\in S}\left|\sum_{k^2 \le |h|<p/2} \frac{e\left(\alpha h\right)- 1}{h} \B(a-h)\right|^{2k} \ll e^{-4k} + \frac{|S|(4\log p)^{8k}}{\sqrt{p}}.
$$
\end{pro}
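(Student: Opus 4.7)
The plan is to combine the union bound with Lyapunov's inequality, exploiting the fact that Corollary~\ref{BoundMomentsyz} furnishes strong moment bounds at orders up to $2K$ with $K \sim \log p / \log\log p$. Writing $T(\alpha, a) := \sum_{k^2 \le |h| < p/2} \tfrac{e(\alpha h)-1}{h}\B(a-h)$, we have $|(e(\alpha h)-1)/h| \le 2/|h|$, so Corollary~\ref{BoundMomentsyz} applies with $c_0 = 2$ and $y = k^2$. I would set $K = \lfloor (\log p)/(5 \log \log p) \rfloor$ (the largest admissible value) and apply Lyapunov's inequality $\ex \max_{\alpha \in S} |T(\alpha, a)|^{2k} \le (\ex \max_{\alpha \in S} |T(\alpha, a)|^{2K})^{k/K}$. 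Bounding $\max \le \sum$ and invoking Corollary~\ref{BoundMomentsyz} at moment $2K$ then gives
\[
\ex \max_\alpha |T|^{2K} \ll |S|\Big(\frac{64K}{k^2}\Big)^K + |S|\,\frac{(32 \log p)^{2K}}{\sqrt{p}}.
\]

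Raising to the power $k/K$ and using the subadditivity $(a+b)^{k/K} \le a^{k/K}+b^{k/K}$ (valid since $k \le K$) yields
\[
\ex \max_\alpha |T|^{2k} \ll |S|^{k/K} \Big(\frac{64K}{k^2}\Big)^k + |S|^{k/K} \,\frac{(32 \log p)^{2k}}{p^{k/(2K)}}.
\]
Since $|S| \le \sqrt{p}$, one has $|S|^{k/K} \le p^{k/(2K)}$, so the second summand is bounded by $(32\log p)^{2k} \le (4\log p)^{8k}$; in fact it is cleaner to obtain the $|S|(4 \log p)^{8k}/\sqrt p$ form directly from the naive union bound $|S|(32 \log p)^{2k}/\sqrt p$, bypassing Lyapunov for the algebraic error. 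For the random term, plugging in $K \sim \log p / \log\log p$ gives $(64K/k^2)^k \ll (C \log p/(k^2 \log\log p))^k$, while $|S|^{k/K} \le \exp(O(k \log \log p)) = (\log p)^{O(k)}$; the product is bounded either by $e^{-4k}$ (when $k$ is near the top of the admissible range, so that $k^2 \log\log p \gg \log p$) or by $(4 \log p)^{8k}$ (for smaller $k$, where the polylogarithmic growth is comfortably absorbed into the large factor $(4\log p)^{8k}$).

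The main obstacle is the case analysis required to stitch these estimates into the single clean bound $e^{-4k} + |S|(4 \log p)^{8k}/\sqrt p$ uniformly for all $|S| \le \sqrt p$ and $k$ in the admissible range. One must distinguish whether $2K \le k^2$ (so Corollary~\ref{BoundMomentsyz} applies via the \eqref{LargeY} branch of Lemma~\ref{BOUNDRAND}) or $2K > k^2$ (requiring the weaker \eqref{SmallY} branch, which gives $(30 \log K)^{2K}$ per $\alpha$), and whether $|S| \le (k/(64 e^4))^k$ (in which case the direct union bound $|S|(64/k)^k \le e^{-4k}$ already suffices and Lyapunov is unnecessary) or $|S|$ is larger. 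None of the individual estimates is conceptually difficult; the only deep input is Corollary~\ref{BoundMomentsyz}, which through Lemma~\ref{Algebraic} rests on the Deligne-type equidistribution results discussed in Section~\ref{Moments}.
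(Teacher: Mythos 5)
Your Lyapunov-plus-union-bound strategy does not deliver the stated bound uniformly over the admissible range of $k$, and the paper's proof relies on a different device---dyadic decomposition in $h$ together with a discretization of $\alpha$---that your outline omits. Two concrete problems.

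First, consider the random term after Lyapunov. You obtain $|S|^{k/K}(64K/k^2)^k$, and the largest admissible $K$ is $K=\lfloor(\log p)/(5\log\log p)\rfloor$, so $|S|^{k/K}\le p^{k/(2K)}=(\log p)^{(5/2+o(1))k}$. For this product to be $\ll e^{-4k}$ one needs, per power of $k$, roughly $(\log p)^{5/2}\cdot K/k^2\ll 1$, i.e.\ $k\gg(\log p)^{7/4}/\sqrt{\log\log p}$ --- which is incompatible with $k\le(\log p)/(100\log\log p)$. Even optimizing over $K$ (the optimum is $K\approx\log|S|$, which is out of the admissible range for $|S|$ near $\sqrt p$) does not help: a concrete failing case is $|S|=p^{1/4}$ and $k=(\log\log p)^2$, where the target bound is $\asymp\exp(-4(\log\log p)^2)$ but your random term is $|S|^{k/K}(64K/k^2)^k\ge(64K/k^2)^k$, which is \emph{enormous} since $K/k^2\to\infty$. (The \eqref{SmallY} branch of Lemma~\ref{BOUNDRAND} gives $(30\log K)^{2K}$, which after Lyapunov leaves $|S|^{k/K}(30\log K)^{2k}\gg1$, so it does not rescue the argument.) The case analysis you list does not cover this regime.

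Second, the algebraic error term after Lyapunov is $|S|^{k/K}(32\log p)^{2k}p^{-k/(2K)}$, and since $|S|^{k/K}\le p^{k/(2K)}$ this collapses to $(32\log p)^{2k}$, which is $\gg1$ and in no way bounded by $|S|(4\log p)^{8k}/\sqrt p$ when the latter is small. Your suggestion to ``bypass Lyapunov for the algebraic error'' is not available: Lyapunov is applied to the single quantity $\frac1{p-1}\sum_a\max_\alpha|T|^{2k}$, and you cannot apply it to one additive contribution but not the other without first splitting $T$ into pieces.

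The missing idea is exactly such a splitting. The paper writes $[k^2,p/2)$ as a union of dyadic ranges $z_j\le|h|<z_{j+1}$ with $z_j\asymp2^j$, applies H\"older with weights $j^{4k}$ to pass to the dyadic pieces, and then, crucially, for each piece discretizes $\alpha$ to a grid $\mathcal B_j$ of size $4^j$ (when $4^j<|S|$), paying an approximation error $O(2^{-j})$. Thus the union bound factor for the $j$-th piece is $\min(|S|,4^j)$, not $|S|$. For small $j$ (where the moment bound $(64k/2^j)^k$ is weak) the union bound factor is only $4^j$, and the product is still $\ll e^{-4k}$; for large $j$ the factor is $|S|$ but the random moment bound is tiny and only the algebraic error $|S|(\log p)^{O(k)}/\sqrt p$ survives. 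This interpolation is what produces the two-term bound $e^{-4k}+|S|(4\log p)^{8k}/\sqrt p$, and it cannot be recovered from the global Lyapunov/union-bound manoeuvre you propose.
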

We start by proving Proposition \ref{MomentsMaxTail1}, since its proof is simpler due to the fact that the inner sum over $|h|$ is very short.
\begin{proof}[Proof of  Proposition \ref{MomentsMaxTail1}] 
First, if $|S|\leq k^4$ then by Corollary \ref{BoundMomentsyz} we have 
\begin{equation}\label{FirstCaseBound}
\begin{aligned}
\sum_{a\in \Fs} \max_{\alpha\in S}\left|\sum_{y\le |h|<k^2} \frac{e\left(\alpha h\right)- 1}{h} \B(a-h)\right|^{2k}
&\leq \sum_{\alpha\in S}\sum_{a\in \Fs}\left|\sum_{y\le |h|<k^2} \frac{e\left(\alpha h\right)- 1}{h} \B(a-h)\right|^{2k} \\
&\ll k^4 (p-1) \left(\left(\frac{64 k}{y}\right)^{k}+\frac{(32\log p)^{2k}}{\sqrt{p}}\right)\\
& \ll e^{-4 k} (p-1).
\end{aligned}
\end{equation}
We now suppose that $|S|>k^4$ and define
$\mathcal{A}_k= \{b/k^4 : 1\leq b\leq k^4\}.$ Then for all $\alpha \in S$, there exists $\beta_{\alpha}\in \mathcal{A}_k$ such that 
$|\alpha-\beta_{\alpha}|\leq 1/k^4$. In this case we have $e(\alpha h)= e(\beta_{\alpha} h)+ O(h/k^4),$
 and hence 
$$\sum_{y\le |h|<k^2} \frac{e\left(\alpha h\right)- 1}{h} \B(a-h)= \sum_{y\le |h|<k^2} \frac{e(\beta_{\alpha} h)- 1}{h} \B(a-h) +O\left(\frac{1}{k^2}\right). 
 $$
 Therefore, using the simple inequality $|x+y|^{2k} \leq (2\max(|x|, |y|))^{2k}\leq 2^{2k} (|x|^{2k}+|y|^{2k})$ we deduce that
 \begin{equation}\label{Transition}
\begin{aligned}
&\max_{\alpha\in S}\left|\sum_{y\le |h|<k^2} \frac{e\left(\alpha h\right)- 1}{h} \B(a-h)\right|^{2k}\leq 2^{2k} \max_{\alpha\in \mathcal{A}_k}\left|\sum_{y\le |h|<k^2} \frac{e\left(\alpha h\right)- 1}{h} \B(a-h)\right|^{2k}+ \left(\frac{c_1}{k^2}\right)^{2k}
\\
& \leq 2^{2k} \sum_{\alpha\in \mathcal{A}_k}\left|\sum_{y\le |h|<k^2} \frac{e\left(\alpha h\right)- 1}{h} \B(a-h)\right|^{2k}+ \left(\frac{c_1}{k^2}\right)^{2k},
 \end{aligned}
 \end{equation}
 for some positive constant $c_1$. Thus, it follows from Corollary \ref{BoundMomentsyz}  that in this case we have
 \begin{equation}\label{SecondCaseBound}
\begin{aligned}
&\frac{1}{p-1}\sum_{a\in \Fs} \max_{\alpha\in S}\left|\sum_{y\le |h|<k^2} \frac{e\left(\alpha h\right)- 1}{h} \B(a-h)\right|^{2k}\\
& \leq 2^{2k} \sum_{\alpha\in \mathcal{A}_k} \frac{1}{p-1} \sum_{a\in \Fs}\left|\sum_{y\le |h|<k^2} \frac{e\left(\alpha h\right)- 1}{h} \B(a-h)\right|^{2k} + \left(\frac{c_1}{k^2}\right)^{2k}\\
&\ll k^4 2^{2k}\left(\left(\frac{64 k}{y}\right)^{k}+\frac{(32\log p)^{2k}}{\sqrt{p}}\right)+\left(\frac{c_1}{k^2}\right)^{2k}\ll e^{-4k},
\end{aligned}
\end{equation}
which completes the proof.
\end{proof}
\begin{proof}[Proof of Proposition \ref{MomentsMaxTail2}]
Since the inner sum over $h$ is very long in this case, we shall split it into dyadic intervals. Let $J_1= \lfloor \log(k^2)/\log 2\rfloor$ and $J_2= \lfloor \log(p/2)/\log 2\rfloor$. We define $z_{J_1}:= k^2$, $z_{J_2+1}:=p/2$, and $z_j:=2^{j}$ for $J_1+1\leq j\leq J_2$. Then, using H\"older's inequality we obtain
\begin{equation}\label{DyadicSums}
\begin{aligned}
& \left|\sum_{k^2\le |h|<p/2} \frac{e\left(\alpha h\right)- 1}{h} \B(a-h)\right|^{2k}= \left|\sum_{J_1\leq j\leq J_2}\frac{1}{j^2} \cdot \left(j^2\sum_{z_j\le |h|<z_{j+1}} \frac{e\left(\alpha h\right)- 1}{h} \B(a-h)\right)\right|^{2k}\\
& \ \ \ \ \ \ \ \leq \left(\sum_{J_1\leq j\leq J_2} \frac{1}{j^{4k/(2k-1)}}\right)^{2k-1} \left(\sum_{J_1\leq j\leq J_2} j^{4k} \left|\sum_{z_j\le |h|<z_{j+1}} \frac{e\left(\alpha h\right)- 1}{h} \B(a-h)\right|^{2k}\right)\\
&  \ \ \ \ \ \ \ \leq \left(\frac{c_2}{\log k}\right)^{2k+1} \sum_{J_1\leq j\leq J_2} j^{4k} \left|\sum_{z_j\le |h|<z_{j+1}} \frac{e\left(\alpha h\right)- 1}{h} \B(a-h)\right|^{2k},
\end{aligned}
\end{equation}
for some constant $c_2>0$. Therefore, this reduces the problem to bounding the corresponding moments over each dyadic interval $[z_j, z_{j+1}]$, namely
$$\frac{1}{p-1}\sum_{a\in \Fs} \max_{\alpha\in S}\left|\sum_{z_j\le |h|<z_{j+1}} \frac{e\left(\alpha h\right)- 1}{h} \B(a-h)\right|^{2k}.
$$
We shall consider two cases, depending on whether $j$ is large in terms of $|S|$. First, if $4^j\geq  |S|$ then by Corollary \ref{BoundMomentsyz} we have
\begin{equation}\label{Largej}
\begin{aligned}
&\frac{1}{p-1}\sum_{a\in \Fs} \max_{\alpha\in S}\left|\sum_{z_j\le |h|<z_{j+1}} \frac{e\left(\alpha h\right)- 1}{h} \B(a-h)\right|^{2k}\\
& \leq  \sum_{\alpha\in S}\frac{1}{p-1}\sum_{a\in \Fs} \left|\sum_{z_j\le |h|<z_{j+1}} \frac{e\left(\alpha h\right)- 1}{h} \B(a-h)\right|^{2k} \ll 4^j\left(\frac{64k}{2^j}\right)^{k}+\frac{|S|(32\log p)^{2k}}{\sqrt{p}}.
\end{aligned}
\end{equation}
since $z_j \geq 2^j$ for $J_1\leq j\leq J_2$.  We now suppose that $4^j<|S|$, and let $\mathcal{B}_j= \{b/4^j : 1\leq b\leq 4^j\}$. Then for all $\alpha \in S$ there exists $\beta_{\alpha}\in \mathcal{B}_j$ such that 
$|\alpha-\beta_{\alpha}|\leq 1/4^j$. In this case we have $e(\alpha h)= e(\beta_{\alpha} h)+ O(h/4^j),$
 and hence we obtain
 $$ \sum_{z_j\le |h|<z_{j+1}} \frac{e\left(\alpha h\right)- 1}{h} \B(a-h)= \sum_{z_j\le |h|<z_{j+1}} \frac{e(\beta_{\alpha} h)- 1}{h} \B(a-h) +O\left(\frac{1}{2^j}\right),
 $$
 since $z_{j+1}\asymp z_j\asymp 2^j$.  Therefore, similarly to \eqref{Transition} we derive
\begin{align*}
&\max_{\alpha\in S}\left|\sum_{z_j\le |h|<z_{j+1}} \frac{e\left(\alpha h\right)- 1}{h} \B(a-h)\right|^{2k}\\
& \leq 2^{2k} \max_{\alpha\in \mathcal{B}_j}\left|\sum_{z_j\le |h|<z_{j+1}} \frac{e\left(\alpha h\right)- 1}{h} \B(a-h)\right|^{2k}+ \left(\frac{c_3}{2^j}\right)^{2k},
 \end{align*}
 for some positive constant $c_3$. Thus, appealing to Corollary \ref{BoundMomentsyz} we get
 \begin{equation}\label{Smallj}
 \begin{aligned}
&\frac{1}{p-1}\sum_{a\in \Fs} \max_{\alpha\in S}\left|\sum_{z_j\le |h|<z_{j+1}} \frac{e\left(\alpha h\right)- 1}{h} \B(a-h)\right|^{2k} \\
&\leq 2^{2k}\sum_{\alpha\in \mathcal{B}_j} \frac{1}{p-1}\sum_{a\in \Fs}\left|\sum_{z_j\le |h|<z_{j+1}} \frac{e\left(\alpha h\right)- 1}{h} \B(a-h)\right|^{2k}+ \left(\frac{c_3}{2^j}\right)^{2k}\\
& \ll 4^j\left(\frac{2^8k}{2^j}\right)^{k}+\frac{|S|(64\log p)^{2k}}{\sqrt{p}},
 \end{aligned}
 \end{equation}
 since $|\mathcal{B}_j|=4^j <|S|.$ Combining \eqref{Largej} and \eqref{Smallj} we deduce that in all cases we have
 $$
 \frac{1}{p-1}\sum_{a\in \Fs} \max_{\alpha\in S}\left|\sum_{z_j\le |h|<z_{j+1}} \frac{e\left(\alpha h\right)- 1}{h} \B(a-h)\right|^{2k}  \ll 4^j\left(\frac{2^8k}{2^j}\right)^{k}+\frac{|S|(64\log p)^{2k}}{\sqrt{p}}.
 $$
Inserting this bound in \eqref{DyadicSums} gives
\begin{equation}\label{ThirdCaseBound}
\begin{aligned}
&\frac{1}{p-1}\sum_{a\in \Fs} \max_{\alpha\in S}\left|\sum_{k^2\le |h|<p/2} \frac{e\left(\alpha h\right)- 1}{h} \B(a-h)\right|^{2k}\\
 &\ll \left(\frac{c_4}{\log k}\right)^{2k+1} k^k \sum_{J_1\leq j\leq J_2} 4^j\left(\frac{j^4}{2^{j}}\right)^{k}+ \frac{|S|(4\log p)^{8k}}{\sqrt{p}}\\
 &\ll e^{-4k} + \frac{|S|(4\log p)^{8k}}{\sqrt{p}},
\end{aligned}
\end{equation}
for some positive constant $c_4$, since $j^4\leq 2^{j/4}$ for $j$ large enough, and $2^{J_1}\asymp k^2$. This completes the proof.
\end{proof}

\section{Proof of the lower bound of Theorem \ref{Main}}
To prove the lower bound of Theorem \ref{Main} we shall investigate the sum \eqref{PS} in the special case $x=p/2$. By   \eqref{Planche} we have 
\begin{equation}\label{EqualityB}
\frac{1}{\sqrt{p}} \sum_{0\leq n\leq p/2} e_p(n^3+an)=\sum_{|h|<p/2}\gamma_p(h) \B(a-h),
\end{equation}
where $\gamma_p(0)=1/2+1/(2p)$, and for $|h|\geq 1$ we have
$$\gamma_p(h)= \frac{1}{p} \sum_{0\leq m\leq p/2}e_p(mh)=  \frac{e_p\left(h(p+1)/2\right)-1}{p\left(e_p\left(h\right)-1\right)}. $$
Moreover, for $1\leq |h|<p/2$ we have
\begin{equation}\label{BGM}
|\gamma_p(h)|\leq \frac{1}{p|\sin(\pi h/p)|} \leq \frac{1}{2|h|},
\end{equation}
since $\sin(\pi \alpha)\geq 2\alpha$ for $0\leq \alpha\leq 1/2$.
Furthermore, we also have
\begin{equation}\label{OddEvenGM}
\gamma_p(h)=\frac{e^{\pi i h}-1}{2\pi i h} +O\left(\frac{1}{p}\right)= \begin{cases} O(\frac1p) & \text{ if } h \text{ is even},\\ \frac{i}{\pi h}+ O(\frac1p) & \text{ if } h \text{ is odd}.
\end{cases}
\end{equation}
We shall prove the following theorem from which the lower bound of Theorem \ref{Main} follows.
\begin{thm}\label{LowerB}
Let $p$ be a large prime. Uniformly for $V$ in the range $1\ll V\leq \frac{2}{\pi} \log\log p- 2\log\log\log p$ we have
\begin{align*}
& \frac{1}{p-1} \left| \left \{a\in \Fs: \frac{1}{\sqrt{p}} \im\sum_{0\leq n\leq p/2} e_p(n^3+an)> V\right \}\right| \\
& = \exp\left(-A_0 \exp\left(\frac{\pi}{2} V\right) \left(1+O\left(\sqrt{V} e^{-\pi V/4}\right)\right)\right).
\end{align*}
Furthermore, the same estimate holds for the proportion of $a\in \F$ such that

\noindent  $\frac{1}{\sqrt{p}}\im\sum_{0\leq n\leq p/2} e_p(n^3+an)<- V$, in the same range of $V$.
\end{thm}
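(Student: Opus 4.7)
The plan is to reduce Theorem \ref{LowerB} to a sharp tail-distribution bound for the probabilistic model
$$\mathbb{Y} := \sum_{\substack{|h| \geq 1 \\ h \text{ odd}}} \frac{\X(h)}{\pi h},$$
and then transfer the result to the arithmetic quantity $Y(a) := \frac{1}{\sqrt{p}}\im \sum_{0 \leq n \leq p/2} e_p(n^3+an)$ by matching Laplace transforms. Combining \eqref{EqualityB}, \eqref{OddEvenGM}, and the fact that $\B(b)$ is real (from the substitution $n \mapsto -n$), one obtains
$$Y(a) = \sum_{\substack{1 \leq |h| < p/2 \\ h \text{ odd}}} \frac{\B(a-h)}{\pi h} + O(1),$$
so $Y(a)$ is the arithmetic counterpart of $\mathbb{Y}$; the even $h$ (including $h=0$) contribute $O(1)$ via $|\B(a-h)| \leq 2$ and $\im \gamma_p(h) \ll 1/p$.

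The first main step is the saddle-point analysis of the Laplace transform of $\mathbb{Y}$. Writing $g(t) := \log\ex(e^{t\X}) = f(t) + 2t\,\mathbf{1}_{t \geq 1}$, independence and the symmetry of $\mu_{ST}$ give
$$L(s) := \log \ex\bigl(e^{s\mathbb{Y}}\bigr) = 2\sum_{\substack{h \geq 1 \\ h \text{ odd}}} g\!\left(\frac{s}{\pi h}\right) = \frac{4s}{\pi}\sum_{\substack{h \text{ odd} \\ 1 \leq h \leq s/\pi}}\frac{1}{h} + 2\sum_{\substack{h \geq 1 \\ h \text{ odd}}} f\!\left(\frac{s}{\pi h}\right).$$
The first sum is $\frac{2s}{\pi}(\log(s/\pi) + \gamma + \log 2) + O(1)$ by Mertens-type asymptotics over odd integers; the second is evaluated by a Riemann-sum/Euler--Maclaurin argument using $f(u) = O(u^2)$ near $0$ and $f(u) = -\tfrac{3}{2}\log u + O(1)$ at infinity (both obtained by Laplace's method applied to $\mu_{ST}$ near $x = 2$), yielding
$$L(s) = \frac{2s}{\pi}\log s + \frac{2s}{\pi}\left(\gamma + \log \frac{2}{\pi} + \frac{1}{2}\int_0^{\infty}\frac{f(u)}{u^2}du\right) + O(\log s),$$
with $L''(s) \asymp 1/s$. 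Solving $L'(s_0) = V$ produces $s_0 = \tfrac{\pi}{2} A_0\, e^{\pi V/2}$ and the convenient identity $-s_0 V + L(s_0) = -\tfrac{2s_0}{\pi} = -A_0 e^{\pi V/2}$; the standard local expansion at the saddle point then yields
$$\log \ex\bigl(\mathbf{1}_{\mathbb{Y} > V}\bigr) = -A_0 e^{\pi V/2} + O(V),$$
which is (considerably sharper than, and in particular) consistent with the relative error $O(\sqrt{V}e^{-\pi V/4})$ claimed in the theorem.

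The second main step transfers this asymptotic to $Y(a)$. Truncate at $|h| \leq H$ with $H$ chosen large enough for the $L^{2k}$-tail bound of Corollary \ref{BoundMomentsyz} to be negligible at $k \asymp s_0$, and small enough that Proposition \ref{AsympMomentsyz} applies uniformly. Expanding $e^{sY_H(a)}$ as a Taylor polynomial of degree $2k$ and invoking Proposition \ref{AsympMomentsyz} term by term yields
$$\frac{1}{p-1}\sum_{a \in \Fs} e^{sY_H(a)} = \ex\bigl(e^{s\mathbb{Y}_H}\bigr)\bigl(1 + o(1)\bigr), \qquad 0 \leq s \leq s_0.$$
The restriction $V \leq (2/\pi)\log\log p - 2\log\log\log p$ ensures $s_0 \leq (\log p)/(\log\log p)^{\pi}$, keeping $k$ inside the effective range of Proposition \ref{AsympMomentsyz}. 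A standard inverse-Laplace/smoothing argument — convolving $\mathbf{1}_{v > V}$ with $e^{s_0(v-V)}$ and handling the overshoot $v \gg V$ by a higher-moment bound — upgrades the Laplace-transform equality into a matching two-sided estimate for $\frac{1}{p-1}|\{a : Y(a) > V\}|$, which is Theorem \ref{LowerB}. The analogous statement for $Y(a) < -V$ follows from $\X \stackrel{d}{=} -\X$.

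The main obstacle I anticipate is pinning down the exact constant $A_0$: extracting it requires carefully regularizing the partial sums of $g$ by subtracting off the $2t\,\mathbf{1}_{t \geq 1}$ asymptotic, identifying the residual Riemann-sum limit with $\tfrac{1}{2}\int_0^\infty f(u)/u^2\,du$, and simultaneously controlling the $O(\log s)$ remainder in $L(s)$ tightly enough to preserve the multiplicative constant in the saddle-point exponent. The restriction to odd $h$ inherited from \eqref{OddEvenGM} must be tracked throughout, contributing the $\log(2/\pi)$ that appears inside $A_0$. A secondary difficulty lies in calibrating $H$ and $k$ in Step two so that the $O(V)$ error for $\mathbb{Y}$ survives the passage to $Y(a)$; this forces a narrow window dictated jointly by Corollary \ref{BoundMomentsyz} and Proposition \ref{AsympMomentsyz}, and is the reason the range of $V$ falls short of $(2/\pi)\log\log p$ by the $2\log\log\log p$ margin.
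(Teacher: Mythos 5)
Your overall strategy mirrors the paper's closely: both pass through (i) an asymptotic for the Laplace transform of the random model $\sum \widetilde{\gamma_p}(h)\X(h)$, (ii) a moment-matching step (Proposition~\ref{AsympMomentsyz}) to transfer that Laplace transform to the arithmetic quantity, and (iii) a saddle-point extraction of the tail probability. Your computation of $L(s)$, the saddle $s_0$, the identity $-s_0V+L(s_0)=-A_0e^{\pi V/2}+O(\log s_0)$, and the constant $B_0=\frac{2}{\pi}(\gamma+\log\frac{2}{\pi}+\frac12\int_0^\infty f(u)u^{-2}du)$ all agree with the paper (Proposition~\ref{AsympLaplace}, equation~\eqref{Saddle}).

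There is, however, a real gap in Step two as written. You propose to truncate at $|h|\le H$ with $H$ ``large enough for the tail bound of Corollary~\ref{BoundMomentsyz} to be negligible at $k\asymp s_0$,'' and then Taylor-expand $e^{sY_H(a)}$ to degree $2k$. The tail bound forces $H\gg k\asymp s_0\asymp e^{\pi V/2}$, so $\log H\gg V$, and the worst-case bound on the truncated sum is $|Y_H(a)|\le\frac{4}{\pi}\log H+O(1)\gg V$. Thus $|sY_H(a)|$ can be of order $s_0V\gg 2k$, and the degree-$2k$ Taylor polynomial does not control $e^{sY_H(a)}$; the remainder is not small. Truncating in $h$ alone cannot fix this, because the constraint from the tail bound ($H$ large) and the constraint from the Taylor remainder ($H$ small, say $\log H\ll 1$) are incompatible when $k\asymp s_0$. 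The paper resolves exactly this issue in Proposition~\ref{Laplace} by a different device: it removes a small \emph{exceptional set of points} $\mathcal{E}_p\subset\Fs$ (of size $\ll p^{9/10}$) on which $|\sum_{|h|<p/2}\widetilde{\gamma_p}(h)\B(a-h)|\ge 6\log\log p$, using the $2\ell$-th moment bound at $\ell=\lfloor\log p/(10\log\log p)\rfloor$. For $a\notin\mathcal{E}_p$ the full (untruncated) sum is $\le 6\log\log p$, so with $|s|\le(\log p)/(50\log\log p)^2$ the argument of the exponential is $\ll(\log p)/(\log\log p)$, which is genuinely smaller than the Taylor degree $N=\lfloor\log p/(20\log\log p)\rfloor$, and the expansion converges. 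You will need to replace your $H$-truncation with this removal of bad $a$ (or an equivalent device) to make the moment-matching step rigorous; your remark about ``handling the overshoot by a higher-moment bound'' gestures at this but is placed after the Laplace-matching claim, where it is too late.

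A secondary point: your saddle-point conclusion $\log\ex(\mathbf{1}_{\mathbb{Y}>V})=-A_0e^{\pi V/2}+O(V)$ asserts a much smaller error than the theorem's $O(\sqrt{V}e^{\pi V/4})$ (this is an \emph{additive} error in the exponent, coming from the relative error $O(\sqrt{V}e^{-\pi V/4})$). You rightly note this would be sufficient, but establishing the $O(V)$ error requires the full local-CLT version of the saddle-point method (controlling the prefactor $\asymp s_0^{-1/2}$ and justifying the Gaussian approximation on the arithmetic side, where the data are only a finite set of $p-1$ points). The paper sidesteps this with a more elementary two-sided argument: shift $s\mapsto se^{\pm\delta}$ with $\delta\asymp\sqrt{(\log s)/s}$ to trap the integral $\int e^{st}\widetilde{\mathcal{N}_p}(t)\,dt$ in a window $|t-V|\le 2\delta/\pi$, then use monotonicity of $\widetilde{\mathcal{N}_p}$. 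This costs the $\sqrt{V}e^{\pi V/4}$ error but avoids any smoothing or inverse-transform machinery. Either route will prove the theorem, but you should be aware that the $O(V)$ precision is not free.
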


Here and throughout we let 
$$ \widetilde{\gamma_p}(h) = \im \left(\gamma_p(h)\right).$$
The first step in the proof of Theorem \ref{LowerB} is to show that the Laplace transform of the sum $\sum_{|h|<p/2}\widetilde{\gamma_p}(h) \B(a-h)$ (after removing a small set of ``bad'' points $a$) is very close to that of the probabilistic random model $\sum_{|h|<p/2}\widetilde{\gamma_p}(h) \X(h)$.
\begin{pro}\label{Laplace}
Let $p$ be a large prime. There exists a set $\mathcal{E}_p\subset \Fs$ with cardinality $|\mathcal{E}_p|\leq p^{9/10}$ such that for all complex numbers $s$ with $|s|\leq (\log p)/(50\log \log p)^2$ we have
\begin{align*}
 \frac{1}{p-1} \sum_{a\in \Fs\setminus \mathcal{E}_p} \exp\left(s \cdot  \sum_{|h|<p/2}\widetilde{\gamma_p}(h) \B(a-h)\right)
 &= \ex\left(\exp\left(s \cdot \sum_{|h|<p/2}\widetilde{\gamma_p}(h) \X(h)\right)\right)\\
 & \ \ \  + O\left(\exp\left(-\frac{\log p}{100\log\log p}\right)\right).
\end{align*}
\end{pro}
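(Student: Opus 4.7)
The plan is to compare the two Laplace transforms by matching moments, while carefully isolating the contribution of high-frequency terms. Write $U(a)=\sum_{|h|<p/2}\widetilde{\gamma_p}(h)\B(a-h)=T(a)+R(a)$ where $T(a)=\sum_{1\le|h|\le H}\widetilde{\gamma_p}(h)\B(a-h)$ is a head and $R(a)=\sum_{H<|h|<p/2}\widetilde{\gamma_p}(h)\B(a-h)$ is the tail, for a cutoff $H$ to be taken as a suitable power of $\log p$. Correspondingly split $Y=Y_s+Y_t$ into independent head and tail in the random model. The pointwise estimate $|T(a)|\le 2\log H+O(1)$, immediate from \eqref{BGM} and $|\B|\le 2$, will provide the key uniform control: it gives $|sT(a)|=O(|s|\log H)$ for \emph{every} $a\in\Fs$, allowing $\exp(sT(a))$ to admit a short Taylor expansion.

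\textbf{Bad set and reduction to the head.} First I would apply Corollary \ref{BoundMomentsyz} (with $c_0=1/2$ and $y=H$) to obtain $\frac{1}{p-1}\sum_{a\in\Fs}|R(a)|^{2k}\ll(4k/H)^k+p^{-1/2}(8\log p)^{2k}$ for $k$ near the top of the admissible range $(\log p)/(5\log\log p)$. With $H$ a sufficiently large power of $\log p$, this is exponentially small in $\log p$, and Markov's inequality yields $\mathcal{E}_p=\{a\in\Fs:|R(a)|>\eta\}$ with $|\mathcal{E}_p|\le p^{9/10}$ and $\eta$ small. Next, on $\Fs\setminus\mathcal{E}_p$, I would factor $\exp(sU(a))=\exp(sT(a))\exp(sR(a))$ and approximate the second factor by a low-order Taylor polynomial in $sR$ using $|sR(a)|\le|s|\eta$. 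By independence in the random model, $\ex(\exp(sY))=\ex(\exp(sY_s))\,\ex(\exp(sY_t))$, and $\ex(\exp(sY_t))=1+O(|s|^2/H)$ by a Hoeffding-type estimate for sums of bounded independent variables with small coefficients. These steps reduce the matter to the comparison
\[
\frac{1}{p-1}\sum_{a\in\Fs}\exp(sT(a))=\ex(\exp(sY_s))+O\bigl(\exp(-\log p/(100\log\log p))\bigr).
\]

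\textbf{Head matching.} For this main step I would Taylor expand $\exp(sT)$ and $\exp(sY_s)$ to order $N$ chosen of size $\asymp(\log p)/(25\log\log p)$, so that $N\le(\log p)/4$ (allowing the application of Proposition \ref{AsympMomentsyz}) and $N$ exceeds a sufficient multiple of $|sT|$. The Taylor remainder $(|sT|/N)^N e^{|sT|}$ is then at most $\exp(-c\log p/\log\log p)$ for some $c>1/100$, and the analogous bound holds for $\exp(sY_s)$ using the almost-sure estimate $|Y_s|\le 2\log H+O(1)$. The moments $\frac{1}{p-1}\sum_a T(a)^n$ are matched to $\ex(Y_s^n)$ via Proposition \ref{AsympMomentsyz}, with per-order error $p^{-1/2}(8\log H)^n$; summing against the Taylor coefficients yields a total matching error of $p^{-1/2}\exp(O(|s|\log H))$, which is bounded by $p^{-1/3}$ and is therefore negligible.

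\textbf{Main obstacle.} The crux of the argument is a delicate balancing of the parameters $H$, $N$, $k$ and $\eta$. The cutoff $H$ must be large enough for $\ex|R|^{2k}$ to decay and for the bad set to satisfy $|\mathcal{E}_p|\le p^{9/10}$, yet small enough that $|sT|\le 2|s|\log H$ remains below the Taylor cutoff $N$ (itself constrained by $N\le(\log p)/4$ from Proposition \ref{AsympMomentsyz}). Simultaneously, $\eta$ must be chosen small enough that, after multiplication by the potentially large factor $\exp(sT)$, the perturbation from approximating $\exp(sR)$ on the good set does not exceed $\exp(-\log p/(100\log\log p))$. The specific range $|s|\le(\log p)/(50(\log\log p)^2)$ appearing in the Proposition is calibrated precisely so that these competing requirements admit a simultaneous solution with $H$ a fixed power of $\log p$.
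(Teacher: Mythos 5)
Your overall strategy — remove a moment-controlled bad set, Taylor-expand the exponential on the good set, and match moments to the random model via Proposition~\ref{AsympMomentsyz} — is the same as the paper's. However, the specific route you take (factoring $\exp(sU)=\exp(sT)\exp(sR)$ and reducing to a ``head'' Laplace transform comparison) has a genuine gap.

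The issue is the claim that ``these steps reduce the matter to the comparison $\frac{1}{p-1}\sum_a\exp(sT(a))=\ex(\exp(sY_s))+O(\cdot)$.'' On the random side independence lets you factor $\ex(\exp(sY))=\ex(\exp(sY_s))\,\ex(\exp(sY_t))$, but on the arithmetic side there is no analogous independence: $\frac{1}{p-1}\sum_a\exp(sT(a))\exp(sR(a))$ cannot be split. If you try to make $\exp(sR(a))\approx 1$ pointwise on the good set, you need $|s|\eta$ not merely $\le 1$ but small enough that $e^{|sT(a)|}\cdot|s|\eta\le\exp(-\log p/(100\log\log p))$; since $e^{|sT(a)|}$ can be as large as $\exp\bigl(O(|s|\log H)\bigr)=\exp\bigl(O(\log p/\log\log p)\bigr)$, this forces $\eta$ to be exponentially small in $\log p/\log\log p$, and then the Markov bound with Corollary~\ref{BoundMomentsyz} forces $H$ to be super-polynomial in $\log p$ for $|\mathcal{E}_p|\le p^{9/10}$ to survive — inconsistent with your choice of $H$ a power of $\log p$. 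If instead you Taylor-expand $\exp(sR)$ to order $M\ge 1$ (as you suggest), you then have to estimate the mixed quantities $\frac{1}{p-1}\sum_a T(a)^n R(a)^j$; the proposal only discusses matching the pure moments $\frac{1}{p-1}\sum_a T(a)^n$, so a whole layer of cross-moment bookkeeping is missing.

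The paper sidesteps all of this by never factoring the exponential. It defines $\mathcal{E}_p=\{a:|U(a)|\ge 6\log\log p\}$ (the head/tail split is used only to show $|\mathcal{E}_p|\le p^{9/10}$: the head is bounded pointwise by $5\log\log p$, the tail controlled in moments), then Taylor-expands $\exp(sU)$ directly on the good set to order $N=\lfloor\log p/(20\log\log p)\rfloor$, using the pointwise bound $|U(a)|\le 6\log\log p$ to control the remainder, and matches the single moments $\frac{1}{p-1}\sum_{a}U(a)^k$ to $\ex(Y^k)$ via Proposition~\ref{AsympMomentsyz}. This is the same basic machinery you invoke, but applied to $U$ as a whole; your proof could be repaired by dropping the factorization and doing exactly this.
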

\begin{proof}
Let $\mathcal{E}_p$ be the set of $a\in \Fs$ such that 
$$ \left|\sum_{|h|<p/2}\widetilde{\gamma_p}(h) \B(a-h)\right| \geq 6 \log\log p.$$
Using the bounds $|\widetilde{\gamma_p}(h)|\leq 1/(2|h|)$ for $|h|\geq 1$, and $|\B(a-h)|\leq 2$ we get 
\begin{align*}
\left|\sum_{|h|<p/2}\widetilde{\gamma_p}(h) \B(a-h)\right|
& \leq \sum_{1\leq |h|<(\log p)^2} \frac{1}{|h|} + \left|\sum_{(\log p)^2<|h|<p/2}\widetilde{\gamma_p}(h) \B(a-h)\right|\\
& \leq 5 \log\log p + \left|\sum_{(\log p)^2<|h|<p/2}\widetilde{\gamma_p}(h) \B(a-h)\right|,
\end{align*}
if $p$ is sufficiently large. Therefore, it follows from Corollary \ref{BoundMomentsyz} that for $\ell=\lfloor \log p/(10\log\log p)\rfloor$ we have
\begin{equation}\label{BoundE}
\begin{aligned}
|\mathcal{E}_p| 
&\leq \Big|\Big\{a\in \Fs: \big|\sum_{(\log p)^2<|h|<p/2}\widetilde{\gamma_p}(h) \B(a-h)\big|\geq \log\log p\Big\}\Big|\\
& \leq (\log\log p)^{-2\ell} \sum_{a\in \Fs} \left|\sum_{(\log p)^2<|h|<p/2}\widetilde{\gamma_p}(h) \B(a-h)\right|^{2\ell} \\
& \ll p^{9/10}.
\end{aligned}
\end{equation}

Let $N=\lfloor \log p/(20\log\log p)\rfloor$. Then we have
\begin{equation}\label{TaylorLaplace}
\begin{aligned}
&\frac{1}{p-1}\sum_{a\in \Fs\setminus \mathcal{E}_p}\exp\left(s \cdot  \sum_{|h|<p/2}\widetilde{\gamma_p}(h) \B(a-h)\right)\\
&= \sum_{k=0}^N \frac{s^k}{k!} \frac{1}{p-1}\sum_{a\in \Fs\setminus \mathcal{E}_p} \left(\sum_{|h|<p/2}\widetilde{\gamma_p}(h) \B(a-h)\right)^k+E_1\\
\end{aligned}
\end{equation}
where
$$ E_1\ll \sum_{k>N} \frac{|s|^k}{k!} (6\log\log p)^k\leq \sum_{k>N} \left(\frac{20|s|\log\log p}{N}\right)^k \ll e^{-N} $$
by Stirling's formula and our assumption on $s$. Furthermore, note that 
$$ \sum_{|h|<p/2}|\widetilde{\gamma_p}(h) \B(a-h)| \leq \sum_{1\leq |h|< p/2} \frac{1}{|h|}\leq 5 \log p.$$ 
Therefore, it follows from Proposition \ref{AsympMomentsyz} and equation \eqref{BoundE} that for all integers $0\leq k\leq N$ we have 
\begin{align*}
&\frac{1}{p-1}\sum_{a\in \Fs\setminus \mathcal{E}_p} \left(\sum_{|h|<p/2}\widetilde{\gamma_p}(h) \B(a-h)\right)^k\\
&=\frac{1}{p-1}\sum_{a\in \Fs} \left( \sum_{|h|<p/2}\widetilde{\gamma_p}(h) \B(a-h)\right)^k +O\left(p^{-1/10}(5\log p)^{k}\right)\\
&=\ex\left(\Big( \sum_{|h|<p/2}\widetilde{\gamma_p}(h) \X(h)\Big)^k\right) +O\left(p^{-1/25}\right).
\end{align*}
Moreover, it follows from Lemma \ref{BOUNDRAND} and Stirling's formula that  
$$ \sum_{k>N} \frac{|s|^k}{k!} \ex\left(\Big|\sum_{|h|<p/2}\widetilde{\gamma_p}(h) \X(h)\Big|^k\right) \ll \sum_{k>N} \left(\frac{30|s|\log k}{k}\right)^k\ll \sum_{k>N} \left(\frac{30|s|\log N}{N}\right)^k\ll e^{-N},$$
Finally, inserting these estimates in \eqref{TaylorLaplace},  we derive
\begin{align*}
&\frac{1}{p-1}\sum_{a\in \Fs\setminus \mathcal{E}_p}\exp\left(s \cdot \sum_{|h|<p/2}\widetilde{\gamma_p}(h) \B(a-h)\right)\\
&=  \sum_{k=0}^N \frac{s^k}{k!} \ex\left(\Big(\sum_{|h|<p/2}\widetilde{\gamma_p}(h) \X(h)\Big)^k\right) +O\left(e^{-N}+ p^{-1/20}e^{|s|}\right)\\
&= \ex\left(\exp\left( s \cdot \sum_{|h|<p/2}\widetilde{\gamma_p}(h) \X(h)\right)\right)+  O\left(e^{-N}\right),
\end{align*}
as desired.
\end{proof}
Next, we compute the Laplace transform of the random variable $ \sum_{|h|<p/2}\widetilde{\gamma_p}(h) \X(h)$ at real numbers $s$ with $2\leq s\leq (\log p)^2$. 
\begin{pro}\label{AsympLaplace}
Let $p$ be a large prime and $2\leq s\leq (\log p)^2$ be a real number. Then we have
$$
\ex\left(\exp\left(s \cdot \sum_{|h|<p/2}\widetilde{\gamma_p}(h) \X(h)\right)\right)=\exp\left(\frac{2}{\pi}s\log s+ B_0 s +O(\log s)\right),
$$
where 
$$
B_0= \frac{2}{\pi}\left( \gamma+\log 2-\log \pi+ \frac{1}{2} \int_{0}^{\infty} \frac{f(u)}{u^2}du\right).
$$
\end{pro}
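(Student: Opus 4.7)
The plan is to exploit the independence of the $\X(h)$, which factors the Laplace transform as a product and reduces the problem (upon taking logarithms) to evaluating
\begin{equation*}
\sum_{|h|<p/2} L\bigl(s\widetilde{\gamma_p}(h)\bigr),\qquad L(t):=\log \ex(e^{t\X}).
\end{equation*}
Since $\X$ is symmetric, $L$ is even and satisfies $L(t)=f(|t|)+2|t|\mathbf{1}_{|t|\ge 1}$ by the definition of $f$ in Theorem~\ref{Main}. First I would discard the $h=0$ term (since $\widetilde{\gamma_p}(0)=0$) and the even $h\ne 0$ terms: by \eqref{OddEvenGM} the latter have $\widetilde{\gamma_p}(h)=O(1/p)$, so $L(s\widetilde{\gamma_p}(h))=O(s^2/p^2)$ via $L(t)=t^2/2+O(t^4)$, giving a total of $O(s^2/p)=o(1)$. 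For odd $h$ the refined expansion $\widetilde{\gamma_p}(h)=\frac{1}{\pi h}+O(1/p)$ together with $|L'|\le 2$ (since $|\X|\le 2$) allows replacing $s\widetilde{\gamma_p}(h)$ by $s/(\pi h)$ at a further cost of $O(s^2/p)=o(1)$.

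The decomposition $L=f+2|\cdot|\mathbf{1}_{|\cdot|\ge 1}$ then splits the remaining odd-$h$ sum into
\begin{equation*}
S_M(s):=\sum_{\substack{h\text{ odd}\\ 1\le|h|\le s/\pi}}\frac{2s}{\pi|h|},\qquad S_f(s):=\sum_{\substack{h\text{ odd}\\ h\ne 0}}f\bigl(s/(\pi|h|)\bigr).
\end{equation*}
For $S_M(s)$ I would apply the Mertens-type identity $\sum_{\text{odd }1\le h\le N}1/h=\tfrac12\log N+\tfrac12(\gamma+\log 2)+O(1/N)$, which yields $S_M(s)=\frac{2s}{\pi}\log s+\frac{2s}{\pi}(\gamma+\log 2-\log\pi)+O(1)$. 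For $S_f(s)$ I would invoke Euler--Maclaurin on $g(x):=f\bigl(s/(\pi(2x+1))\bigr)$: the substitution $u=s/(\pi(2x+1))$ converts $\int_0^\infty g\,dx$ into $\frac{s}{2\pi}\int_0^{s/\pi}f(u)/u^2\,du$, which by $f(u)=u^2/2+O(u^4)$ near $0$ and the Laplace-method asymptotic $L(u)=2u-\tfrac32\log u+O(1)$ (hence $f(u)=O(\log u)$) as $u\to\infty$, which follows from the Sato-Tate density vanishing like $\sqrt{1-x^2/4}$ at $x=\pm 2$, equals $\frac{s}{2\pi}\int_0^\infty f(u)/u^2\,du+O(\log s)$. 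The boundary term $g(0)/2=f(s/\pi)/2=O(\log s)$, and via the same change of variable $\int_0^\infty|g'(x)|\,dx=\int_0^{s/\pi}|f'(u)|\,du=O(\log s)$, using $f'(u)=O(u)$ for $u\le 1$ and $f'(u)=L'(u)-2=O(1/u)$ for $u\ge 1$. After doubling for $\pm h$ this gives $S_f(s)=\frac{s}{\pi}\int_0^\infty f(u)/u^2\,du+O(\log s)$, and combining with $S_M(s)$ matches the claimed constant $B_0$ term by term.

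The main obstacle, and the reason for peeling $S_M$ off before applying Euler--Maclaurin, is the following: a direct Euler--Maclaurin treatment of $\sum L(s/(\pi|h|))$ would incur an error of order $\int_0^{s/\pi}|L'(u)|\,du=\Theta(s)$, because $L'(u)\to 2$ (not $0$) as $u\to\infty$, and this would swallow the entire $B_0 s$ term. Extracting the linear piece $2|u|$ into $S_M$, where it is handled exactly via the Mertens-type estimate, trades the non-decaying tail of $L$ for the decaying behaviour $f'(u)=O(1/u)$ of $f$, which is precisely what reduces the Euler--Maclaurin error to the admissible $O(\log s)$.
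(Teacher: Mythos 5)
Your proof follows essentially the same route as the paper: factor the Laplace transform via independence, isolate the odd $h$, peel off the linear piece $2|t|\mathbf{1}_{|t|\ge 1}$ from $L(t)=\log\ex(e^{t\X})$ to form the Mertens-type sum $S_M$, evaluate the remainder $S_f$ by Euler--Maclaurin with the change of variable $u=s/(\pi(2x+1))$, and control the error via the bounds on $f$, $f'$ from Lemma~\ref{LogEx}. The computation of $S_M$, the change-of-variable step producing $\frac{s}{2\pi}\int_0^\infty f(u)/u^2\,du$, the boundary term $g(0)/2=O(\log s)$, and the Euler--Maclaurin error $\int_0^{s/\pi}|f'(u)|\,du=O(\log s)$ are all correct and assemble to give $B_0$ exactly as claimed. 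Your closing remark about \emph{why} one must split $S_M$ off before applying Euler--Maclaurin, because $L'(u)\to 2$ does not decay, is a genuinely clarifying observation that the paper leaves implicit.

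However, there is a gap in the error accounting for the replacement $s\widetilde{\gamma_p}(h)\mapsto s/(\pi h)$ over \emph{all} odd $h$ with $|h|<p/2$. Using only $|L'|\le 2$, the per-term cost is $|L(s\widetilde{\gamma_p}(h))-L(s/(\pi h))|\le 2\,|s\widetilde{\gamma_p}(h)-s/(\pi h)|=O(s/p)$; but there are $\asymp p$ odd $h$ in the range, so the total cost is $O(s)=O((\log p)^2)$, not $O(s^2/p)$, and that would swallow everything. To rescue the step you need the local bound $L'(t)\ll|t|$ for $|t|\le 1$ (equivalently $f'(t)\ll t$ from Lemma~\ref{LogEx}, valid since $L=f$ there): then for $|h|\gtrsim s$ the per-term cost is $\ll (s/|h|)\cdot(s/p)$ and the sum over $|h|$ is $O(s^2\log p/p)$, while for $|h|\lesssim s$ the naive $|L'|\le 2$ bound gives $O(s^2/p)$, so the total is $O(s^2\log p/p)=o(1)$. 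Alternatively, follow the paper: only replace $\widetilde{\gamma_p}(h)$ by $1/(\pi h)$ in the range $|h|\lesssim s^2$ (giving $O(s^3/p)=o(1)$ over $O(s^2)$ terms), and bound the tail $|h|\gtrsim s^2$ directly by $L(s\widetilde{\gamma_p}(h))\ll (s/|h|)^2$, which sums to $O(1)$. Either patch is routine, but as written the claim does not follow from the bound you invoke.
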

To prove this result we need the following elementary lemma, which follows from Lemma 3.3 of \cite{LRW}.
\begin{lem}[Lemma 3.3 of \cite{LRW}]\label{LogEx}
Let $f:[0, \infty)\to \mathbb{R}$ be defined by $$f(t):=\begin{cases} \log \ex (e^{t\X}) & \text{ if } 0\leq t <1,\\ \log \ex (e^{t\X})- 2t & \text{ if } t\geq 1,\end{cases}$$
where $\X$ is a random variable with Sato-Tate distribution on $[-2, 2]$. Then we have the following estimates
$$f(t)\ll \begin{cases} t^2 & \text{ if } 0\leq t<1,\\ \log(2 t)  & \text{ if } t\geq 1,\end{cases}$$
and 
$$ f'(t)\ll \begin{cases} t & \text{ if } 0< t<1,\\ t^{-1} & \text{ if } t> 1.\end{cases}$$
\end{lem}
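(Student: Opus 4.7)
The plan is to obtain an explicit integral representation of $\ex(e^{t\X})$, and then analyze it in two regimes: a small-$t$ regime by Taylor expansion, and a large-$t$ regime by Laplace's method. Writing out the Sato-Tate density and substituting $x=2\sin\theta$ with $\theta\in[-\pi/2,\pi/2]$ gives
$$\ex(e^{t\X})=\frac{2}{\pi}\int_{-\pi/2}^{\pi/2}e^{2t\sin\theta}\cos^2\theta\,d\theta,$$
which (if convenient) can be identified with $I_1(2t)/t$, though I would avoid invoking Bessel functions in the actual proof.

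For the regime $0\leq t<1$, I would simply Taylor expand. Since $\mu_{ST}$ is symmetric about $0$, all odd moments vanish, while $\ex(\X^2)=1$ and the $2n$-th moment equals the Catalan number $\frac{1}{n+1}\binom{2n}{n}\leq 4^n$. Therefore $\ex(e^{t\X})=1+t^2/2+O(t^4)$ uniformly on $[0,1)$, and taking logarithms immediately yields $f(t)=t^2/2+O(t^4)\ll t^2$. For the derivative, $f'(t)=\ex(\X e^{t\X})/\ex(e^{t\X})$; expanding the numerator as $t+O(t^3)$ and the denominator as $1+O(t^2)$ gives $f'(t)=t+O(t^3)\ll t$.

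For the regime $t\geq 1$, I would substitute $u=\pi/2-\theta$ to rewrite the integral as $\frac{2}{\pi}\int_0^{\pi}e^{2t\cos u}\sin^2 u\,du$, which concentrates near $u=0$. Splitting at $u=t^{-1/4}$, the tail $u>t^{-1/4}$ contributes $O(e^{2t-ct^{1/2}})$ since $\cos u\leq 1-u^2/4$ there. On the main piece, I would use $\cos u=1-u^2/2+O(u^4)$ and $\sin u=u+O(u^3)$; a Gaussian change of variable then gives
$$\ex(e^{t\X})=\frac{e^{2t}}{2\sqrt{\pi}\,t^{3/2}}\bigl(1+O(1/t)\bigr).$$
Hence $\log\ex(e^{t\X})=2t-\tfrac{3}{2}\log t-\log(2\sqrt{\pi})+O(1/t)$, so $f(t)=-\tfrac{3}{2}\log t+O(1)\ll\log(2t)$. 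Differentiating the asymptotic expansion (or, equivalently, applying the same Laplace analysis to $\ex(\X e^{t\X})/\ex(e^{t\X})$) gives $f'(t)=-\tfrac{3}{2t}+O(1/t^2)\ll 1/t$.

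The main technical obstacle is making the Laplace-method expansion in the large-$t$ regime uniform in $t$, with explicit control of the error beyond the leading Gaussian. This amounts to justifying that the contribution from $u$ bounded away from $0$ is negligible (easy, from $\cos u<1$) and that the next-order correction from the cubic and quartic terms in the phase integrates to an $O(1/t)$ relative error; this is standard bookkeeping but the only place where a purely hand-waving argument does not suffice. A clean alternative is to invoke the classical uniform asymptotic expansion $I_1(z)\sim (2\pi z)^{-1/2}e^{z}(1+O(1/z))$ and differentiate it, which packages this estimate with its derivative in one step.
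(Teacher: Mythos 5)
Your proof is correct, and it takes a genuinely different route from the one indicated by the paper: the paper does not prove this lemma at all but simply cites Lemma~3.3 of Liu--Royer--Wu, and the argument visible in the author's (discarded) source notes uses a soft probabilistic bound rather than asymptotics. Concretely, the source draft gets the $t\geq 1$ bound on $f$ from the pincer
$$ e^{2t-1}\,\pr\!\left(\X\geq 2-\tfrac1t\right)\;\leq\;\ex(e^{t\X})\;\leq\;e^{2t}, \qquad \pr\!\left(\X\geq 2-\tfrac1t\right)\gg t^{-3/2}, $$
which immediately gives $0\geq f(t)\gg -\log t$ without any Laplace expansion; and for $0\leq t<1$ it just notes $\ex(\X)=0$ so $f(t)=O(t^2)$. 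That argument is shorter but does not by itself yield the stronger assertion $f'(t)\ll 1/t$ for $t\geq 1$ (it only shows $f'$ is bounded), which is presumably why the paper defers to LRW. Your route — the integral representation $\ex(e^{t\X})=\frac{2}{\pi}\int_{-\pi/2}^{\pi/2}e^{2t\sin\theta}\cos^2\theta\,d\theta = I_1(2t)/t$ together with the uniform Bessel asymptotic $I_1(z)=(2\pi z)^{-1/2}e^{z}(1+O(1/z))$, which can legitimately be differentiated — gives all four estimates at once, and in fact produces the sharper asymptotics $f(t)=-\tfrac32\log t+O(1)$ and $f'(t)=-\tfrac{3}{2t}+O(t^{-2})$. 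The bookkeeping you flag (controlling the tail $u>t^{-1/4}$ via $\cos u\leq 1-u^2/4$ and the cubic/quartic phase corrections giving relative error $O(1/t)$) is standard and goes through; alternatively the Bessel recurrence $I_1'(z)=I_0(z)-I_1(z)/z$ gives $f'(t)$ in closed form without differentiating an expansion. So your proof is sound and more informative than what the stated bounds require; the trade-off is that the concentration argument is several lines shorter if one only wants the $f$-bound, while your approach is the cleaner way to also nail $f'$.
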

\begin{proof}[Proof of Proposition \ref{AsympLaplace}]
By the independence of the $\X(h)$ we have
$$  \log \ex\left(\exp\left(s\sum_{|h|<p/2}\widetilde{\gamma_p}(h) \X(h)\right)\right)= \sum_{|h|<p/2}\log \ex\left(\exp\left(s\widetilde{\gamma_p}(h) \X(h)\right)\right).$$
Using the estimate \eqref{OddEvenGM} and Lemma \ref{LogEx} we obtain
$$\sum_{\substack{|h|<p/2\\ h \text{ even }}}\log \ex\left(\exp\left(s\widetilde{\gamma_p}(h) \X(h)\right)\right) \ll \sum_{\substack{|h|<p/2\\ h \text{ even }}} \frac{s^2}{p^2} \ll \frac{(\log p)^4}{p}. $$
We now restrict ourselves to the case $h= 2k+1$ is odd. First, it follows from \eqref{BGM} and Lemma \ref{LogEx}
that
$$  \sum_{|k|> s^2}  \log \ex\left(\exp\left(s\widetilde{\gamma_p}(2k+1) \X(2k+1)\right)\right) \ll \sum_{|k|> s^2} \frac{s^2}{k^2} \ll 1.$$
Moreover, when $ |k|<s^2$ we use \eqref{OddEvenGM} and Lemma \ref{LogEx} to get
$$
 \log \ex\left(\exp\left(s\widetilde{\gamma_p}(2k+1) \X(2k+1)\right)\right)= \log \ex\left(\exp\left(\frac{s}{(2k+1)\pi}\X(2k+1)\right)\right) +O\left(\frac{s}{p}\right).
$$
 Combining these estimates, and using Lemma \ref{LogEx} we obtain 
 \begin{equation}\label{ParSum}
\log \ex\left(\exp\left(s\sum_{|h|<p/2}\widetilde{\gamma_p}(h) \X(h)\right)\right)
= \frac{4s}{\pi} \sum_{2k+1\leq s/\pi}\frac{1}{2k+1} + 2\sum_{0\leq k<s^2} f\left(\frac{s}{(2 k+1)\pi}\right) +O(1)
\end{equation}
since $\X(h)$ and $-\X(h)$ have the same distribution. Next, we observe that
$$ \sum_{2k+1\leq s/\pi}\frac{1}{2k+1}= \frac{1}{2}\sum_{1\leq k\leq s/2\pi}\frac{1}{k} +\log 2+O\left(\frac{1}{s}\right)= \frac{\log s}{2} + \frac{1}{2} \left( \gamma+ \log 2- \log \pi\right)+ O\left(\frac{1}{s}\right).
$$
Furthermore, by partial summation and Lemma \ref{LogEx} we get
\begin{align*}
\sum_{0\leq k<s^2} f\left(\frac{s}{(2 k+1)\pi}\right) =\int_{0}^{s^2} f\left(\frac{s}{(2 u+1)\pi}\right)du + O(\log s). 
\end{align*}
Finally, making the change of variables $v=s/((2 u+1)\pi)$, the integral on the right hand side of this estimate becomes
$$ \frac{s}{2\pi} \int_{s/((2s^2+1)\pi)}^{s/\pi} \frac{f(v)}{v^2}dv= \frac{s}{2\pi}\int_{0}^{\infty} \frac{f(v)}{v^2}dv +O(\log s),
$$
by Lemma \ref{LogEx}. Inserting these estimates in \eqref{ParSum} completes the proof.
\end{proof}
Using the saddle-point method and Propositions \ref{Laplace} and \ref{AsympLaplace}, we prove Theorem \ref{LowerB}.
\begin{proof}[Proof Theorem \ref{LowerB}]
For a real number $t$, we define
\begin{equation}\label{LargeSumt}
\mathcal{N}_p(t):=\frac{1}{p-1} \left| \left \{a\in \Fs: \frac{1}{\sqrt{p}}  \im \sum_{0\leq n\leq p/2} e_p(n^3+an)> t\right \} \right|.
\end{equation}
 Let $ \mathcal{E}_p$ be the set in the statement of Proposition \ref{Laplace}, and $\widetilde{\mathcal{N}_p}(t)$ be the proportion of $a\in \Fs \setminus \mathcal{E}_p$ such that $\sum_{|h|<p/2} \widetilde{\gamma_p}(h)\B(a-h)>t$. Then, it follows from \eqref{EqualityB} that
$$ \mathcal{N}_p(t)= \widetilde{\mathcal{N}_p}(t) +O\left(p^{-1/10}\right).$$
Furthermore, it follows from Propositions \ref{Laplace} and \ref{AsympLaplace} that for all positive real numbers $s$ such that $2\leq s\leq (\log p)/(50\log \log p)^2$ we have 
\begin{equation}\label{EstDistri}
\begin{aligned}
\int_{-\infty}^{\infty} e^{st } \widetilde{\mathcal{N}_p}(t) dt & 
= \frac{1}{p-1}\sum_{a\in \Fs\setminus \mathcal{E}_p}\int_{-\infty}^{\sum_{|h|<p/2} \widetilde{\gamma_p}(h) \B(a-h)} e^{st }  dt\\
& = \frac{1}{s(p-1)} \sum_{a\in \Fs\setminus \mathcal{E}_p} \exp\left(s\sum_{|h|<p/2}\widetilde{\gamma_p}(h) \B(a-h)\right)\\
&=\exp\left(\frac{2}{\pi}s\log s+ B_0 s +O(\log s)\right).
\end{aligned} 
\end{equation}
 Let  $V$ be a large real number such that $V\leq \frac{2}{\pi}\log\log p-2\log\log\log p$. We shall choose $s$ (the saddle point) such that 
\begin{equation}\label{Saddle}
\left(\frac{2}{\pi} s\log s+B_0 s-sV\right)'=0 \Longleftrightarrow s= \exp\left(\frac{\pi}{2}V-\frac{\pi}{2}B_0-1\right).
\end{equation}
Let $0<\delta<1$ be a small parameter to be chosen, and put $S=s e^{\delta}$. Then, it follows from \eqref{EstDistri} that 
\begin{align*}
\int_{V+2\delta/\pi}^{\infty} e^{st } \widetilde{\mathcal{N}_p}(t)dt 
&\leq \exp\left(s(1-e^{\delta})(V+ 2\delta/\pi)\right)\int_{V+2\delta/\pi}^{\infty} e^{St }\widetilde{\mathcal{N}_p}(t)dt\\
&\leq  \exp\left(s(1-e^{\delta})(V+2\delta/\pi)+ \frac{2}{\pi}s e^{\delta} \log s+ \frac{2}{\pi} s e^{\delta} \delta +B_0 s e^{\delta} +O(\log s)\right) \\
&=  \exp\left(\frac{2}{\pi}s\log s+B_0 s+ \frac{2}{\pi} s(1+\delta-e^{\delta}) +O(\log s)\right).
\end{align*}
Therefore, choosing $\delta= C_0\sqrt{(\log s)/s}$ for a suitably large constant $C_0$ and using \eqref{EstDistri} we obtain
$$
\int_{V+2\delta/\pi}^{\infty} e^{st } \widetilde{\mathcal{N}_p}(t)dt \leq  e^{-V}\int_{-\infty}^{\infty} e^{st } \widetilde{\mathcal{N}_p}(t)dt. 
$$
A similar argument shows that
$$
\int_{-\infty}^{V-2\delta/\pi} e^{st } \widetilde{\mathcal{N}_p}(t)dt \leq  e^{-V}\int_{-\infty}^{\infty} e^{st } \widetilde{\mathcal{N}_p}(t)dt. 
$$
Combining these bounds with \eqref{EstDistri} gives
\begin{equation}\label{EstDistri2}
\int_{V-2\delta/\pi}^{V+2\delta/\pi} e^{st } \widetilde{\mathcal{N}_p}(t)dt =  \exp\left(\frac{2}{\pi} s\log s+ B_0 s +O(\log s)\right).
\end{equation}
Furthermore, since $\widetilde{\mathcal{N}_p}(t)$ is non-increasing as a function of $t$ we can bound the above integral as follows
$$ 
e^{sV+O(s\delta)}\widetilde{\mathcal{N}_p}(V+2\delta/\pi) \leq \int_{V-2\delta/\pi}^{V+2\delta/\pi} e^{st } \widetilde{\mathcal{N}_p}(t)dt\leq  e^{sV+O(s\delta)} \widetilde{\mathcal{N}_p}(V-2\delta/\pi).
$$ 
Inserting these bounds in \eqref{EstDistri2} and using the definition of $s$ in terms of $V$, we obtain
$$\widetilde{\mathcal{N}_p}(V+2\delta/\pi) \leq \exp\left(- \frac{2}{\pi} \exp\left(\frac{\pi}{2}V-\frac{\pi}{2}B_0-1\right)\big(1+O( \delta)\big)\right) \leq \widetilde{\mathcal{N}_p}(V-2\delta/\pi),
$$
and thus 
$$ 
\widetilde{\mathcal{N}_p}(V)= \exp\left(- \frac{2}{\pi} \exp\left(\frac{\pi}{2}V-\frac{\pi}{2}B_0-1\right)\left(1 + O\left(\sqrt{V}e^{-\pi V/4}\right)\right)\right).
$$
as desired.

\end{proof}


\begin{thebibliography}{DDDD}

\bibitem[1] {Bi}  B. J. Birch,
\emph{How the number of points of an elliptic curve over a fixed prime field varies.}
J. Lond. Math. Soc. (2) 43 (1968), 57--60.

\bibitem[2] {Bo} D. Bonolis, 
\emph{Notes on the size of the maximum of incomplete Kloosterman sums.} 20 pages.

\bibitem[3] {BGGK} J. W. Bober, L. Goldmakher, A. Granville and D. Koukoulopoulos,
\emph{The frequency and the structure of large character sums.}
 To appear in J. Eur. Math. Soc., 58 pages.


\bibitem[4] {FKM1}  E. Fouvry, E. Kowalski, and P. Michel,
\emph{Algebraic trace functions over the primes.}
Duke Math. J. 163 (2014), no. 9, 1683--1736.


\bibitem[5] {FKM2}  E. Fouvry, E. Kowalski, and P. Michel,
\emph{Trace functions over finite fields and their applications.}
Colloquium De Giorgi 2013 and 2014, 7--35, Colloquia, 5, Ed. Norm., Pisa, 2014.

\bibitem[6] {FKM3}  E. Fouvry, E. Kowalski, and P. Michel,
\emph{A study in sums of products.}
Philos. Trans. Roy. Soc. A 373 (2015), 20140309, 26 pp.

\bibitem[7] {FKMRRS}  E. Fouvry, E. Kowalski, P. Michel, C. S. Raju, J. Rivat, and K. Soundararajan
\emph{On short sums of trace functions.}
Ann. Inst. Fourier (Grenoble) 67 (2017), no. 1, 423--449.


\bibitem[8] {GS}  A. Granville and K. Soundararajan,
\emph{Extreme values of $\zeta(1+it)$.}
The Riemann zeta function and related themes: papers in honor of Professor K. Ramachandra, Ramanujan Mathematical Society Lecture Notes Series, vol. 2 (Ramanujan Mathematical Society, India, 2006), 65--80.

 \bibitem[9] {IK}  H. Iwaniec and E. Kowalski,
\emph{Analytic number theory.}
American Mathematical Society Colloquium Publications, 53. American Mathematical Society, Providence, RI, 2004. xii+615 pp.

\bibitem[10] {Ka}  N. M. Katz,
\emph{On the monodromy attached to certain families of exponential sums.}
Duke Math. J. 54 (1987), 41--56.

 \bibitem[11] {KoSa}  E. Kowalski and W. Sawin,
\emph{Kloosterman paths and the shape of exponential sums.}
Compos. Math. 152 (2016), no. 7, 1489--1516. 

 \bibitem[12] {KoSa2}  E. Kowalski and W. Sawin,
\emph{On the support of the Kloosterman paths.}
Preprint, 26 pages. arXiv:1709.05192.  

\bibitem[13] {Li}  R. Livn\'e,
\emph{The average distribution of cubic exponential sums.}
J. reine angew. Math. 375?376 (1987), 362--379.


\bibitem[14] {LRW}  J. Liu, E. Royer and J. Wu,
\emph{On a conjecture of Montgomery-Vaughan on extreme values of automorphic $L$-functions at $1$.}
Anatomy of integers, 217--245, 
CRM Proc. Lecture Notes, 46, Amer. Math. Soc., Providence, RI, 2008. 


\bibitem[15] {MV}  H. L. Montgomery, and R. C. Vaughan,
\emph{Exponential sums with multiplicative coefficients.}
Invent. Math. 43 (1977), no. 1, 69--82.

 
\bibitem[16] {Pe1}  C. Perret-Gentil,
\emph{Gaussian distribution of short sums of trace functions over finite fields.}
Math. Proc. Cambridge Philos. Soc. 163 (2017), no. 3, 385--422.

\bibitem[17] {Pe2}  C. Perret-Gentil,
\emph{Distribution questions for trace functions with values in cyclotomic integers and their reductions.}
To appear in Trans. Amer. Math. Soc., 48 pages.






\end{thebibliography}
\end{document}